\documentclass[final,leqno]{siamltex}



\usepackage{graphicx}
\usepackage{amsmath,amsfonts}
\usepackage[framemethod=tikz]{mdframed}
\usepackage{showkeys}

\def\beq{\begin{equation}}
\def\eeq{\end{equation}}

\newtheorem{assumption}{Assumption}
\newtheorem{example}{Example}
\def\ba{\begin{array}}
\def\ea{\end{array}}
\def\beann{\begin{eqnarray*}}
\def\eeann{\end{eqnarray*}}
\def\bea{\begin{eqnarray}}
\def\eea{\end{eqnarray}}

\def\BT{\begin{theorem}}
\def\ET{\end{theorem}}
\def\BL{\begin{lemma}}
\def\EL{\end{lemma}}
\def\BC{\begin{corollary}}
\def\EC{\end{corollary}}
\def\BE{\begin{example}}
\def\EE{\end{example}}
\def\BD{\begin{definition}}
\def\ED{\end{definition}}
\def\BR{\begin{remark}}
\def\ER{\end{remark}}
\def\BAS{\begin{assumption}}
\def\EAS{\end{assumption}}
\def\BI{\begin{itemize}}
\def\EI{\end{itemize}}

\def\BMP{\begin{minipage}{9.5cm}}
\def\EMP{\end{minipage}}
\def\MPT{\begin{minipage}{11.5cm}}
\def\EPT{\end{minipage}}

\newtheorem{remark}[theorem]{Remark}


\title{A Generalized Worst-Case Complexity Analysis for Non-Monotone Line Searches}


\author{Geovani N. Grapiglia\thanks{Departamento de Matem\'atica, Universidade Federal do Paran\'a, Centro Polit\'ecnico, Cx. postal 19.081, 81531-980, Curitiba, Paran\'a, Brazil (grapiglia@ufpr.br). This author was supported by the National Council for Scientific and Technological Development - Brazil (grants 401288/2014-5 and 406269/2016-5).}
        \and Ekkehard W. Sachs\thanks{Department of Mathematics, University of Trier, 54286, Trier, Germany (sachs@uni-trier.de).
}}

\date{November 8, 2019}

\begin{document}

\maketitle

\begin{abstract}
We study the worst-case complexity of a non-monotone line search framework that covers a wide variety of known techniques published in the literature. In this framework, the non-monotonicity is controlled by a sequence of nonnegative parameters. We obtain complexity bounds to achieve approximate first-order optimality even when this sequence is not summable. 

\end{abstract}

\begin{keywords}
Nonlinear optimization, Unconstrained optimization, Non-monotone line search, Worst-case complexity
\end{keywords}


\pagestyle{myheadings}
\thispagestyle{plain}
\markboth{A Generalized Worst-Case Complexity Analysis for Non-Monotone Line Searches}{G.N. Grapiglia and E.W. Sachs}

\section{Introduction}
\label{intro}

The worst-case complexity analysis of algorithms for non-convex optimization has become a very active research area \cite{NEMI,NES1,NES3}. This type of analysis aims at an estimate for the maximum number of iterations that an algorithm needs to generate an $\epsilon$-approximate critical point of the objective function. The numerical schemes for smooth unconstrained optimization considered so far include line search algorithms \cite{BER,CAR2,GS,NES1,ROY}, trust-region algorithms \cite{CUR,GRA,GRA3,GRAT} and regularization algorithms \cite{BEL,BIR,BIR3,CAR0,CAR1,DUS,GRAN,MAR,NES2,XU}. 

In most of these studies, the algorithms that were analyzed are monotone, that is, they do not allow an increase in the values of the objective function in successive iterations. In this paper we consider a whole family of non-monotone step-size rules and analyze their complexity. This is carried out by using a general algorithmic framework, extending the work in \cite{SAC}.

The framework is built upon a generalized Armijo rule in which the non-monotonicity is controlled by a sequence $\left\{\nu_{k}\right\}$ of non-negative real numbers. 
It was shown in \cite{GS} that, if the sequence $\left\{\nu_{k}\right\}$ is summable, the algorithms in the class take at most $\mathcal{O}(\epsilon^{-2})$ iterations to find $\epsilon$-approximate critical points. 
Here, we relax the summability assumption and provide complexity estimates for the resulting non-monotone schemes. 
As a by-product, we obtain a unified liminf-type global convergence result for non-monotone schemes in which $\nu_{k}\to 0$, covering the non-monotone rules in \cite{GRI} and \cite{ZHA}. Compared to these approaches, the analysis presented here is remarkably simple and our generalized results allow more freedom for the development of new non-monotone line search algorithms. As an example, we design a non-monotone stepsize rule related to the Metropolis rule. 

The paper is organized as follows. In Section 2, we present worst-case complexity estimates. We use these estimates to derive in a new way global convergence results as outlined in Section 3. In Section 4, a Metropolis-based non-monotone rule is motivated and defined. We report preliminary numerical experiments in Section 5. 


\section{Worst-Case Complexity Analysis}
\label{complexity1}

Given a Hilbert space $\left(X,\left\langle\,.\,,\,.\,\right\rangle\right)$, we consider the minimization problem
\begin{equation}
\min_{x\in X}\,f(x),
\label{eq:extra1.1}
\end{equation}
where $f:X\to\mathbb{R}$ is Fr\'echet differentiable. We shall denote the gradient of $f$ at $x\in X$ by $\nabla f(x)$. Furthermore, given $x_{k}\in X$, we call $d_{k}\in X$ a descent direction for $f$ at $x_{k}$ if $\left\langle \nabla f(x_{k}),d_{k}\right\rangle<0$. Finally, we shall denote the norm induced by the inner product $\left\langle\,.\,,\,.\,\right\rangle$ by $\|\cdot\|$.

We will consider the following general descent algorithm with a non-monotone Armijo line search, which is a slight modification of the scheme proposed by Sachs and Sachs \cite{SAC}.

\noindent\textbf{Algorithm 1. (General Non-monotone Descent Algorithm)}
\\[0.2cm]
\noindent\textbf{Step 0} Given $x_{0}\in X$, $\alpha_{0}>0$ and $\beta,\rho\in (0,1)$, set $k:=0$.\\
\noindent\textbf{Step 1} Compute a descent direction $d_{k}\in X$ for $x_{k}$.\\
\noindent\textbf{Step 2.1} Set $l:=0$.\\
\noindent\textbf{Step 2.2} Choose $\nu_{k,l}\geq 0$. If 
                         \begin{equation}
                         f(x_{k}+\alpha_{k}\beta^{l}d_{k})\leq f(x_{k})+\rho\alpha_{k}\beta^{l}\left\langle \nabla f(x_{k}),d_{k}\right\rangle +\nu_{k,l}
                         \label{eq:2.1}
                         \end{equation}
                         set $l_{k}=l$, $\nu_{k}=\nu_{k,l_{k}}$ and go to Step 3. Otherwise, set $l:=l+1$ and repeat Step 2.2.\\
\noindent\textbf{Step 3} Set $x_{k+1}=x_{k}+\alpha_{k}\beta^{l_{k}}d_{k}$, $\alpha_{k+1}=\alpha_{k}\beta^{l_{k}-1}$, $k:=k+1$ and go to Step 1. 

\begin{remark}
The root of the non-monotone term $f(x_{k})+\nu_{k,l}$ in Algorithm 1 can be traced back to \cite{LI} and \cite{CRUZ}. In addition, a trust-region with line search using a similar non-monotone term has been proposed in \cite{TARZAN}.
\end{remark}

\begin{remark}
The difference between Algorithm 1 and the general scheme in \cite{SAC} is that at any given iteration $k$, instead of using a fixed non-monotone term $\nu_{k}$, we allow it to change within the line search procedure. This flexibility allows to cover the non-monotone rule described in Section 4.
\end{remark}

To analyze the worst-case complexity of Algorithm 1, we shall consider the following assumptions:

\begin{itemize}
\item[\textbf{A1}] The objective function $f:X\to\mathbb{R}$ is Fr\'echet differentiable and its gradient $\nabla f:X\to X$ is Lipschitz continuous with Lipschitz constant $L>0$.

\item[\textbf{A2}] There exists $f_{low}\in\mathbb{R}$ such that $f(x)\geq f_{low}$ for all $x\in X$.

\item[\textbf{A3}] For all $k$,
                   \begin{equation*}
                   \left\langle \nabla f(x_{k}),d_{k}\right\rangle\leq -c_{1}\|\nabla f(x_{k})\|^{2}\quad\text{and}\quad \|d_{k}\|\leq c_{2}\|\nabla f(x_{k})\|
                   \end{equation*}
                   for some constants $c_{1},c_{2}>0$. 
\end{itemize}

\begin{lemma}
\label{lem:2.1}
Suppose $f:X\to\mathbb{R}$ is Fr\'echet differentiable and its gradient $\nabla f:X\to X$ is Lipschitz continuous with Lipschitz constant $L>0$:
\begin{equation*}
\|\nabla f(x)-\nabla f(y)\|\leq L\|x-y\|,\quad\forall\,x,y\in X.
\end{equation*} 
Then,
\begin{equation}
f(y)\leq f(x)-\left\langle \nabla f(x),y-x\right\rangle+\dfrac{L}{2}\|y-x\|^{2}\quad\forall x,y\in X.
\label{eq:extra1}
\end{equation}
\end{lemma}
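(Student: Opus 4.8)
The plan is to derive the bound from the fundamental theorem of calculus together with the Lipschitz hypothesis. Before doing so, I would flag that the displayed inequality cannot hold as written: with the minus sign in front of $\langle\nabla f(x),y-x\rangle$, it already fails for the identity $f(x)=x$ on $X=\mathbb{R}$ (so that $\nabla f\equiv 1$ and every $L>0$ is admissible): taking $x=0$, $y=1$, the right-hand side equals $-1+\tfrac{L}{2}$, which is strictly below $f(1)=1$ whenever $L<4$. Thus the sign is a genuine typographical error, not a convention, and the intended statement --- the one the subsequent Armijo analysis in \eqref{eq:2.1} actually needs --- is the standard descent lemma
\begin{equation*}
f(y)\leq f(x)+\langle\nabla f(x),y-x\rangle+\frac{L}{2}\|y-x\|^{2},
\end{equation*}
equivalently $f(y)\leq f(x)-\langle\nabla f(x),x-y\rangle+\frac{L}{2}\|y-x\|^{2}$. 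I would prove this corrected form.

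To this end I would fix $x,y\in X$ and introduce the scalar function $\phi:[0,1]\to\mathbb{R}$ defined by $\phi(t)=f\big(x+t(y-x)\big)$. By the chain rule for Fr\'echet derivatives together with the Riesz identification $Df(x)[h]=\langle\nabla f(x),h\rangle$, the map $\phi$ is continuously differentiable with $\phi'(t)=\langle\nabla f(x+t(y-x)),y-x\rangle$, continuity of $\phi'$ following from the (Lipschitz) continuity of $\nabla f$. The fundamental theorem of calculus for a real-valued function of one real variable then yields $f(y)-f(x)=\int_{0}^{1}\phi'(t)\,dt=\int_{0}^{1}\langle\nabla f(x+t(y-x)),y-x\rangle\,dt$.

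Subtracting $\langle\nabla f(x),y-x\rangle=\int_{0}^{1}\langle\nabla f(x),y-x\rangle\,dt$ and regrouping, I would write
\begin{equation*}
f(y)-f(x)-\langle\nabla f(x),y-x\rangle=\int_{0}^{1}\big\langle\nabla f(x+t(y-x))-\nabla f(x),\,y-x\big\rangle\,dt.
\end{equation*}
The decisive estimate is Cauchy--Schwarz followed by the Lipschitz bound: the integrand is at most $\|\nabla f(x+t(y-x))-\nabla f(x)\|\,\|y-x\|\leq L\,t\|y-x\|\cdot\|y-x\|=Lt\|y-x\|^{2}$. Integrating $Lt\|y-x\|^{2}$ over $t\in[0,1]$ produces $\frac{L}{2}\|y-x\|^{2}$, which is exactly the stated second-order term, completing the corrected inequality.

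I expect the only delicate point to be the first step in the Hilbert-space setting: verifying that $\phi$ is differentiable with the claimed derivative (chain rule plus the Riesz representation) and that the fundamental theorem of calculus applies. Since $\phi$ is scalar-valued and $\phi'$ is continuous, this reduces to the elementary one-dimensional result, so no Bochner integration is needed; everything else is Cauchy--Schwarz and the Lipschitz hypothesis, which are routine.
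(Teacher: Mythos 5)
Your proof is correct, and it is more self-contained than the paper's, which establishes Lemma \ref{lem:2.1} by citation alone (``see Theorem 1.2.22 in \cite{SUN}'') rather than by argument. The line-integral proof you give --- reduce to the scalar function $\phi(t)=f\bigl(x+t(y-x)\bigr)$, apply the one-dimensional fundamental theorem of calculus, subtract $\langle\nabla f(x),y-x\rangle$, and bound the remainder via Cauchy--Schwarz and the Lipschitz hypothesis by $\int_{0}^{1}Lt\,\|y-x\|^{2}\,dt=\tfrac{L}{2}\|y-x\|^{2}$ --- is precisely the standard argument contained in the cited reference, so in substance you follow the same route; the value you add is supplying the details, in particular the clean reduction to the scalar fundamental theorem of calculus in the Hilbert-space setting, which correctly avoids any need for Bochner integration. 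Your observation about the sign is also right and worth recording: as printed, \eqref{eq:extra1} carries $-\left\langle\nabla f(x),y-x\right\rangle$ where the descent lemma requires $+\left\langle\nabla f(x),y-x\right\rangle$, and your counterexample ($f$ the identity on $\mathbb{R}$, $x=0$, $y=1$, any $L<4$) shows the printed inequality is genuinely false rather than an alternative convention. Since the later estimates --- Lemma \ref{lem:2.2} via Lemma 2 of \cite{GS}, and the chain of inequalities \eqref{eq:2.6} --- all rely on the standard form with the plus sign, this is a transcription typo with no effect on the rest of the paper, and the corrected statement you prove is the one the subsequent analysis actually uses.
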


\begin{proof}
See, for example, Theorem 1.2.22 in \cite{SUN}.
\end{proof}

The next lemma provides a lower bound on $\alpha_{k}$.
\begin{lemma}
\label{lem:2.2}
Suppose that A1 and A3 hold. Then, for all $k$,
\begin{equation}
\alpha_{k}\geq\min\left\{\alpha_{0},\dfrac{2(1-\rho)c_{1}}{Lc_{2}^{2}}\right\}\equiv\bar{\alpha}.
\label{eq:2.2}
\end{equation}
\end{lemma}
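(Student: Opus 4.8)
The plan is to prove (\ref{eq:2.2}) by induction on $k$, exploiting that the only mechanism which decreases the stepsize is the backtracking in Step 2.2, and that this backtracking is quantitatively limited by the descent inequality of Lemma~\ref{lem:2.1}. The base case is trivial, since $\alpha_{0}\geq\bar{\alpha}$ by the very definition of $\bar{\alpha}$. For the inductive step I would assume $\alpha_{k}\geq\bar{\alpha}$ and split according to the accepted backtracking exponent $l_{k}$.

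The easy case is $l_{k}=0$: here Step 3 sets $\alpha_{k+1}=\alpha_{k}\beta^{-1}$, and since $\beta\in(0,1)$ this gives $\alpha_{k+1}\geq\alpha_{k}\geq\bar{\alpha}$, so no estimate is needed. The substantive case is $l_{k}\geq 1$. The observation that drives everything is that the updated stepsize $\alpha_{k+1}=\alpha_{k}\beta^{l_{k}-1}$ is exactly the trial value tested and rejected at $l=l_{k}-1$. Since (\ref{eq:2.1}) failed there and $\nu_{k,l_{k}-1}\geq 0$, I would record the reversed, $\nu$-free inequality
\[
f(x_{k}+\alpha_{k+1}d_{k}) > f(x_{k})+\rho\,\alpha_{k+1}\langle \nabla f(x_{k}),d_{k}\rangle,
\]
and confront it with Lemma~\ref{lem:2.1} applied at $y=x_{k}+\alpha_{k+1}d_{k}$, $x=x_{k}$,
\[
f(x_{k}+\alpha_{k+1}d_{k}) \leq f(x_{k})+\alpha_{k+1}\langle \nabla f(x_{k}),d_{k}\rangle+\frac{L}{2}\alpha_{k+1}^{2}\|d_{k}\|^{2}.
\]
Subtracting $f(x_{k})$, dividing by $\alpha_{k+1}>0$, and rearranging isolates $(1-\rho)\bigl(-\langle \nabla f(x_{k}),d_{k}\rangle\bigr) < \frac{L}{2}\alpha_{k+1}\|d_{k}\|^{2}$. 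Finally I would insert the two bounds from Assumption A3, namely $-\langle \nabla f(x_{k}),d_{k}\rangle \geq c_{1}\|\nabla f(x_{k})\|^{2}$ and $\|d_{k}\|^{2}\leq c_{2}^{2}\|\nabla f(x_{k})\|^{2}$; the common factor $\|\nabla f(x_{k})\|^{2}$ cancels (it is nonzero, for otherwise $d_{k}$ could not be a descent direction), leaving $\alpha_{k+1} > 2(1-\rho)c_{1}/(Lc_{2}^{2}) \geq \bar{\alpha}$.

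The hard part is not any single estimate but the correct bookkeeping: recognizing that $\alpha_{k+1}$ equals the \emph{rejected} trial stepsize $\alpha_{k}\beta^{l_{k}-1}$, not the accepted one $\alpha_{k}\beta^{l_{k}}$, which is precisely what makes the resulting lower bound independent of $\alpha_{k}$. A small but essential companion point is that the nonnegativity of $\nu_{k,l}$ only reinforces the reversed inequality coming from the failed test, so the non-monotone term drops out of the bound entirely. Combining the two cases closes the induction and yields (\ref{eq:2.2}).
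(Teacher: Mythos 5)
Your proof is correct and takes essentially the same route as the paper: the paper's own proof simply defers to Lemma 2 of \cite{GS}, observing that $\nu_{k,l}\geq 0$ lets that monotone-case argument carry over unchanged, and that argument is precisely your induction, in which the rejected trial step $\alpha_{k+1}=\alpha_{k}\beta^{l_{k}-1}$ is bounded below via the descent lemma and A3 after the nonnegative $\nu$-term is dropped from the failed Armijo test. One harmless discrepancy: you use the descent lemma in its correct form $f(y)\leq f(x)+\langle\nabla f(x),y-x\rangle+\frac{L}{2}\|y-x\|^{2}$, whereas the paper's statement of Lemma \ref{lem:2.1} carries a sign typo on the inner-product term.
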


\begin{proof}
Since $\nu_{k,l}\geq 0$ for all $k$ and $l$, the result can be shown as in the proof of Lemma 2 in \cite{GS}.
\end{proof}

The first theorem gives an upper bound on the total number of function evaluations after $k\geq 1$ iterations.
\begin{theorem}
\label{lem:stela1}
Suppose that A1 and A3 hold and let $N_{k}$ be the total number of function evaluations up to the $k$-th iteration of Algorithm 1. Then,
\begin{equation}
N_{k}\leq 2(k+1)+\dfrac{1}{\log(\beta)}\left[\log(\bar{\alpha})-\log(\alpha_{0})\right], 
\label{eq:stela2.1}
\end{equation}
where $\bar{\alpha}$ is defined in Lemma \ref{lem:2.2}.
\end{theorem}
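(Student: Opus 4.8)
The plan is to count the function evaluations one iteration at a time and then turn the total number of backtracking steps into a telescoping sum involving the logarithms of the step sizes. First I would note that at iteration $j$ the loop in Step 2.2 tests condition (\ref{eq:2.1}) for the exponents $l=0,1,\dots,l_{j}$ and accepts at $l=l_{j}$; since $f(x_{j})$ is already known from the preceding iteration (with $f(x_{0})$ supplied in Step 0), this amounts to exactly $l_{j}+1$ new evaluations of $f$. Summing over $j=0,1,\dots,k$ gives
\[
N_{k}=\sum_{j=0}^{k}(l_{j}+1)=(k+1)+\sum_{j=0}^{k}l_{j},
\]
so everything reduces to bounding $\sum_{j=0}^{k}l_{j}$.

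Next I would use the update rule $\alpha_{j+1}=\alpha_{j}\beta^{l_{j}-1}$ from Step 3. Taking logarithms gives $l_{j}=1+(\log\alpha_{j+1}-\log\alpha_{j})/\log(\beta)$, and summing this identity telescopes to
\[
\sum_{j=0}^{k}l_{j}=(k+1)+\frac{1}{\log(\beta)}\left(\log\alpha_{k+1}-\log\alpha_{0}\right).
\]
Substituting this back yields the exact expression $N_{k}=2(k+1)+\bigl(\log\alpha_{k+1}-\log\alpha_{0}\bigr)/\log(\beta)$.

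Finally I would invoke Lemma \ref{lem:2.2}, which provides the uniform lower bound $\alpha_{k+1}\geq\bar{\alpha}$ and hence $\log\alpha_{k+1}\geq\log\bar{\alpha}$. The single point that needs attention is the sign of $\log(\beta)$: because $\beta\in(0,1)$ we have $\log(\beta)<0$, so dividing the inequality $\log\alpha_{k+1}-\log\alpha_{0}\geq\log\bar{\alpha}-\log\alpha_{0}$ by $\log(\beta)$ reverses it and produces the desired bound $\bigl(\log\alpha_{k+1}-\log\alpha_{0}\bigr)/\log(\beta)\leq\bigl(\log\bar{\alpha}-\log\alpha_{0}\bigr)/\log(\beta)$, from which (\ref{eq:stela2.1}) follows at once. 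I do not expect a genuine obstacle in this argument; the only care required is the bookkeeping that converts the lower bound on the step size into an upper bound on the number of evaluations through the negative factor $1/\log(\beta)$, together with correctly identifying the per-iteration cost as $l_{j}+1$.
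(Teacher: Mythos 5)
Your proof is correct and rests on exactly the two facts the paper identifies as sufficient---the update rule $\alpha_{j+1}=\alpha_{j}\beta^{l_{j}-1}$ and the uniform lower bound $\alpha_{k}\geq\bar{\alpha}$ of Lemma~\ref{lem:2.2}---so it is essentially the same argument; the paper simply cites Theorem~3 of \cite{GS} instead of writing out the telescoping computation you give. The only point worth noting is a bookkeeping convention: your per-iteration count of $l_{j}+1$ evaluations (with $f(x_{j})$ reused from the previously accepted trial point) is precisely the accounting that reproduces the constant $2(k+1)$ in (\ref{eq:stela2.1}).
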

\begin{proof}
Theorem 3 in \cite{GS} applies here, since the proof only uses $\alpha_{k+1}=\beta^{\ell_{k}-1}\alpha_{k}$ and the bound $\alpha_{k}\geq\bar{\alpha}$ for all $k$.
\end{proof}

\begin{remark}
\label{rem:stela1}
From (\ref{eq:stela2.1}) we see that in Algorithm 1 the average number of function evaluations per iteration, up to the $k$-th iteration, is asymptotically bounded by $2$:
\begin{equation*}
\frac{N_{k}}{k} \leq 2 \left(1 + \frac{1}{k}\right) +\frac{1}{k} \dfrac{\log(\bar{\alpha})-\log(\alpha_{0})}{\log(\beta)}.
\end{equation*}
\end{remark}

Now, define
\begin{equation}
\kappa_{c}=\min\left\{\rho\beta\alpha_{0}c_{1},\dfrac{2\beta\rho(1-\rho)c_{1}^{2}}{Lc_{2}^{2}}\right\}.
\label{eq:2.3}
\end{equation}
With respect to sequence $\left\{\nu_{k}\right\}_{k=0}^{+\infty}$ that controls the amount of the non-monotonicity, we shall consider the following assumption:

\begin{itemize}
\item[\textbf{A4}] $\lim_{T\to +\infty}\frac{1}{T}\sum_{k=0}^{T-1}\nu_{k}=0$.
\end{itemize}

Note that, if $\sum_{k=0}^{+\infty}\nu_{k}<+\infty$, then A4 is satisfied. However, A4 also may be satisfied for sequences that are not summable. An example is $\nu_{k}=M/(k+1)$, with $M>0$, for which $\sum_{k=0}^{+\infty}\nu_{k}=+\infty$ but 
\begin{equation*}
\lim_{T\to +\infty}\dfrac{1}{T}\sum_{k=0}^{T-1}\nu_{k}\leq\lim_{T\to +\infty}\dfrac{M}{T}\left[\ln(T)+1\right]=0.
\end{equation*}
Therefore, the complexity analysis presented here includes non-monotone terms that were not covered by the analysis in \cite{GS}.

Given $\epsilon>0$, under the assumption A4, we shall denote by $T_{0}(\epsilon)$ any non-negative integer such that
\begin{equation}
T\geq T_{0}(\epsilon) \Longrightarrow \frac{1}{T}\sum_{k=0}^{T-1}\,\nu_{k} \leq\dfrac{\kappa_{c}\epsilon^{2}}{2},
\label{eq:extra2.2}
\end{equation}
where $\kappa_{c}$ is given by (\ref{eq:2.3}).

Our next theorem establishes an upper bound on the number of iterations necessary for Algorithm 1 generate $x_{k}$ such that $\|\nabla f(x_{k})\|\leq\epsilon$. Using (\ref{eq:extra2.2}), the proof follows by adapting the proof of Theorem 1 in \cite{GS}.
\begin{theorem}
\label{thm:2.3}
Suppose that A1-A4 hold and let the sequence $\left\{x_{k}\right\}_{k=0}^{+\infty}$ be ge\-ne\-ra\-ted by Algorithm 1. If 
\begin{equation}
T\geq\max\left\{T_{0}(\epsilon), \dfrac{2(f(x_{0})-f_{low})}{\kappa_{c}\epsilon^{2}}\right\},
\label{eq:2.4}
\end{equation}
then
\begin{equation}
\min_{k=0,\ldots,T-1}\|\nabla f(x_{k})\|\leq\epsilon.
\label{eq:2.5}
\end{equation}
\end{theorem}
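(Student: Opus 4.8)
The plan is to derive a one-step ``sufficient decrease up to $\nu_k$'' inequality and then average it over the first $T$ iterations. First I would evaluate the acceptance test (\ref{eq:2.1}) at the accepted index $l=l_k$, which gives
\begin{equation*}
f(x_{k+1})\leq f(x_k)+\rho\alpha_k\beta^{l_k}\langle\nabla f(x_k),d_k\rangle+\nu_k.
\end{equation*}
Invoking the first part of A3 to replace $\langle\nabla f(x_k),d_k\rangle$ by the bound $-c_1\|\nabla f(x_k)\|^2$, this becomes
\begin{equation*}
f(x_{k+1})\leq f(x_k)-\rho c_1\alpha_k\beta^{l_k}\|\nabla f(x_k)\|^2+\nu_k.
\end{equation*}

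The crucial observation is that the effective step length $\alpha_k\beta^{l_k}$ admits a lower bound independent of $k$. From the update rule $\alpha_{k+1}=\alpha_k\beta^{l_k-1}$ in Step 3 we have $\alpha_k\beta^{l_k}=\beta\,\alpha_{k+1}$, and Lemma \ref{lem:2.2} guarantees $\alpha_{k+1}\geq\bar\alpha$; hence $\alpha_k\beta^{l_k}\geq\beta\bar\alpha$. Consequently $\rho c_1\alpha_k\beta^{l_k}\geq \rho c_1\beta\bar\alpha$, and a direct comparison with (\ref{eq:2.3}) shows that $\rho c_1\beta\bar\alpha=\kappa_c$. This yields the clean recursion
\begin{equation*}
f(x_{k+1})\leq f(x_k)-\kappa_c\|\nabla f(x_k)\|^2+\nu_k,\qquad k=0,1,\ldots
\end{equation*}

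Next I would sum this inequality for $k=0,\ldots,T-1$, telescoping the function values and using A2 ($f(x_T)\geq f_{low}$) to obtain
\begin{equation*}
\kappa_c\sum_{k=0}^{T-1}\|\nabla f(x_k)\|^2\leq f(x_0)-f_{low}+\sum_{k=0}^{T-1}\nu_k.
\end{equation*}
Bounding the left-hand sum below by $T\min_{0\leq k\leq T-1}\|\nabla f(x_k)\|^2$ and dividing by $\kappa_c T$ gives
\begin{equation*}
\min_{0\leq k\leq T-1}\|\nabla f(x_k)\|^2\leq\frac{f(x_0)-f_{low}}{\kappa_c T}+\frac{1}{\kappa_c T}\sum_{k=0}^{T-1}\nu_k.
\end{equation*}
Finally, the hypothesis (\ref{eq:2.4}) is tailored so that each term on the right is at most $\epsilon^2/2$: the condition $T\geq 2(f(x_0)-f_{low})/(\kappa_c\epsilon^2)$ handles the first term, while $T\geq T_0(\epsilon)$ together with the defining property (\ref{eq:extra2.2}) of $T_0(\epsilon)$ handles the second. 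Adding the two halves yields $\min_{0\leq k\leq T-1}\|\nabla f(x_k)\|^2\leq\epsilon^2$, which is (\ref{eq:2.5}).

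The only genuinely delicate point is the treatment of the non-monotone terms: since $\{\nu_k\}$ need not be summable, one cannot absorb $\sum_{k=0}^{T-1}\nu_k$ into a fixed constant as in the summable case of \cite{GS}. The remedy is to keep the average $\frac{1}{T}\sum_{k=0}^{T-1}\nu_k$ intact and exploit A4 through $T_0(\epsilon)$, so that dividing by $T$ (rather than requiring a finite total) is exactly what forces this contribution below $\epsilon^2/2$. Everything else is a routine telescoping/averaging argument, and the identification $\kappa_c=\rho c_1\beta\bar\alpha$ is the bookkeeping that makes the constants line up.
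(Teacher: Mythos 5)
Your proposal is correct and follows essentially the same route as the paper's proof: the one-step inequality $f(x_{k+1})\leq f(x_k)-\kappa_c\|\nabla f(x_k)\|^2+\nu_k$ (obtained via the acceptance test, A3, the identity $\alpha_k\beta^{l_k}=\beta\alpha_{k+1}$, Lemma \ref{lem:2.2}, and the observation $\kappa_c=\rho\beta c_1\bar\alpha$), followed by telescoping, averaging, and splitting the bound into the two $\epsilon^2/2$ halves dictated by (\ref{eq:2.4}) and (\ref{eq:extra2.2}). The paper merely writes the same chain of inequalities in the rearranged form $\nu_k+f(x_k)-f(x_{k+1})\geq\kappa_c\|\nabla f(x_k)\|^2$.
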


\begin{proof}
It follows from (\ref{eq:2.1}), A3 and Lemma \ref{lem:2.2} that
\begin{eqnarray}
\nu_{k}+f(x_{k})-f(x_{k+1}) &\geq & \rho\alpha_{k}\beta^{l_{k}}\left(-\left\langle \nabla f(x_{k}),d_{k} \right\rangle\right)\nonumber\\
                           & \geq & \rho\beta\alpha_{k+1}c_{1}\|\nabla f(x_{k})\|^{2}\nonumber\\
                         &\geq & \kappa_{c}\|\nabla f(x_{k})\|^{2},
\label{eq:2.6}
\end{eqnarray}
where $\kappa_{c}$ is defined in (\ref{eq:2.3}). Summing up these inequalities for $k=0,\ldots,T-1$, and using A2, we get
\begin{eqnarray*}
\sum_{k=0}^{T-1}\kappa_{c}\|\nabla f(x_{k})\|^{2}\leq 
f(x_{0})-f(x_{T})+\sum_{k=0}^{T-1}\,\nu_{k}
\leq  f(x_{0})-f_{low}+\sum_{k=0}^{T-1}\,\nu_{k}.
\end{eqnarray*}
Consequently, 
\begin{equation*}
\kappa_{c}T\min_{k=0,\ldots,T-1}\|\nabla f(x_{k})\|^{2}\leq f(x_{0})-f_{low}+\sum_{k=0}^{T-1}\,\nu_{k},
\end{equation*}
which gives
\begin{equation}
\min_{k=0,\ldots,T-1}\|\nabla f(x_{k})\|^{2}\leq \dfrac{f(x_{0})-f_{low}}{\kappa_{c}T}+\dfrac{1}{\kappa_{c}T}\sum_{k=0}^{T-1}\,\nu_{k}.
\label{eq:2.7}
\end{equation}
Since (\ref{eq:2.4}) holds, we have $T\geq T_{0}(\epsilon)$, and so it follows from (\ref{eq:extra2.2}) that
\begin{equation}
\dfrac{1}{\kappa_{c}T}\sum_{k=0}^{T-1}\,\nu_{k}\leq\dfrac{\epsilon^{2}}{2}.
\label{eq:2.8}
\end{equation}
On the other hand, also by (\ref{eq:2.4}) we have 
\begin{equation}
\dfrac{f(x_{0})-f_{low}}{\kappa_{c}T}\leq\dfrac{\epsilon^{2}}{2}.
\label{eq:2.9}
\end{equation}
Combining (\ref{eq:2.7}), (\ref{eq:2.8}) and (\ref{eq:2.9}), we have
\begin{equation*}
\min_{k=0,\ldots,T-1}\|\nabla f(x_{k})\|^{2}\leq\dfrac{\epsilon^{2}}{2}+\dfrac{\epsilon^{2}}{2}=\epsilon^{2},
\end{equation*}
which gives (\ref{eq:2.5}).
\end{proof}

An important class of non-monotone schemes is the one that corresponds to $\left\{\nu_{k}\right\}$ summable. As mentioned in the Introduction, it includes, for example, the non-monotone rule of Zhang and Hager \cite{ZHA} and the non-monotone rule of Ahookhosh, Amini and Bahrami \cite{AHO} (for details, see Section 6 in \cite{SAC}). For this class, Theorem \ref{thm:2.3} has the following consequence.
\begin{corollary}
\label{cor:2.1}
Suppose that A1-A3 hold and that $\sum_{k=0}^{+\infty}\,\nu_{k}<+\infty$. Let $\left\{x_{k}\right\}_{k=0}^{+\infty}$ be a sequence generated by Algorithm 1. Given $\epsilon\in (0,1)$, if 
\begin{equation} \label{eq:2.3a}
T\geq 2\max\left\{\sum_{k=0}^{+\infty}\,\nu_{k},\,f(x_{0})-f_{low}\right\}\kappa_{c}^{-1}\epsilon^{-2},
\end{equation}
then
\begin{equation}
\min_{k=0,\ldots,T-1}\|\nabla f(x_{k})\|\leq\epsilon.
\label{eq:extra2.3}
\end{equation}
\end{corollary}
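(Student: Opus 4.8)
The plan is to deduce Corollary \ref{cor:2.1} directly from Theorem \ref{thm:2.3} by exhibiting an explicit admissible choice of $T_{0}(\epsilon)$. First I would note that summability of $\left\{\nu_{k}\right\}$ forces A4: since the tail of a convergent series vanishes, the Ces\`aro averages $\frac{1}{T}\sum_{k=0}^{T-1}\nu_{k}$ tend to $0$, so A1--A4 all hold and Theorem \ref{thm:2.3} is applicable. It therefore suffices to rewrite the abstract threshold (\ref{eq:2.4}) in the concrete form (\ref{eq:2.3a}).

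Next I would pin down a value of $T_{0}(\epsilon)$. Writing $S=\sum_{k=0}^{+\infty}\nu_{k}$ and using that the $\nu_{k}$ are nonnegative, monotonicity of the partial sums gives $\frac{1}{T}\sum_{k=0}^{T-1}\nu_{k}\leq \frac{S}{T}$ for every $T$. Hence the implication required in (\ref{eq:extra2.2}) holds as soon as $\frac{S}{T}\leq \frac{\kappa_{c}\epsilon^{2}}{2}$, that is, as soon as $T\geq 2S\kappa_{c}^{-1}\epsilon^{-2}$; so $T_{0}(\epsilon):=2S\kappa_{c}^{-1}\epsilon^{-2}$ is an admissible choice. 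Substituting this into the bound of Theorem \ref{thm:2.3}, the threshold becomes
\[
\max\left\{2S\kappa_{c}^{-1}\epsilon^{-2},\ \tfrac{2(f(x_{0})-f_{low})}{\kappa_{c}\epsilon^{2}}\right\}=2\max\left\{S,\ f(x_{0})-f_{low}\right\}\kappa_{c}^{-1}\epsilon^{-2},
\]
which is exactly the right-hand side of (\ref{eq:2.3a}). Thus any $T$ satisfying (\ref{eq:2.3a}) also satisfies (\ref{eq:2.4}), and (\ref{eq:extra2.3}) follows immediately.

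As an alternative route that avoids invoking $T_{0}(\epsilon)$ explicitly, I could argue straight from inequality (\ref{eq:2.7}): replacing $\sum_{k=0}^{T-1}\nu_{k}$ there by the larger quantity $S$ and then bounding each of the two resulting terms $\frac{f(x_{0})-f_{low}}{\kappa_{c}T}$ and $\frac{S}{\kappa_{c}T}$ by $\epsilon^{2}/2$ under hypothesis (\ref{eq:2.3a}) yields $\min_{k}\|\nabla f(x_{k})\|^{2}\leq\epsilon^{2}$, hence (\ref{eq:extra2.3}). I do not expect a genuine obstacle here, since the statement is a specialization rather than a new estimate; the only point demanding a moment's care is checking that the partial-sum bound $\sum_{k=0}^{T-1}\nu_{k}\leq S$ makes the proposed $T_{0}(\epsilon)$ legitimate in (\ref{eq:extra2.2}), which is immediate from nonnegativity of the $\nu_{k}$.
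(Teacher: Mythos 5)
Your proposal is correct and follows essentially the same route as the paper's own proof: verify A4 via the bound $\frac{1}{T}\sum_{k=0}^{T-1}\nu_{k}\leq\frac{1}{T}\sum_{k=0}^{+\infty}\nu_{k}$, take $T_{0}(\epsilon)=2\left(\sum_{k=0}^{+\infty}\nu_{k}\right)\kappa_{c}^{-1}\epsilon^{-2}$ as the admissible threshold in (\ref{eq:extra2.2}), and observe that (\ref{eq:2.3a}) then coincides with condition (\ref{eq:2.4}) of Theorem \ref{thm:2.3}. The alternative argument you sketch from (\ref{eq:2.7}) is also valid but is not needed.
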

\begin{proof}
Note that
\begin{equation*}
0\leq\frac{1}{T}\sum_{k=0}^{T-1}\nu_{k}\leq\frac{1}{T}\sum_{k=0}^{+\infty}\nu_{k},\quad \text{for all}\,\,T\geq 1.
\end{equation*}
Since $\left\{\nu_{k}\right\}$ is summable, it follows that A4 is sa\-tis\-fied. Moreover, 
\begin{equation*}
T\geq\dfrac{2\left(\sum_{k=0}^{+\infty}\nu_{k}\right)}{\kappa_{c}\epsilon^{2}}\Longrightarrow \dfrac{\kappa_{c}\epsilon^{2}}{2}\geq \frac{1}{T}\left(\sum_{k=0}^{+\infty}\nu_{k}\right)\geq \frac{1}{T}\left(\sum_{k=0}^{T-1}\nu_{k}\right).
\end{equation*}
Therefore, (\ref{eq:extra2.2}) holds for
\begin{equation*}
T_{0}(\epsilon)=\dfrac{2\sum_{k=0}^{+\infty}\nu_{k}}{\kappa_{c}\epsilon^{2}},
\end{equation*}
and (\ref{eq:2.3a}) can be rewritten as
\begin{equation*}
T\geq\max\left\{T_{0}(\epsilon),\dfrac{2(f(x_{0})-f_{low})}{\kappa_{c}\epsilon^{2}}\right\}.
\end{equation*}
Thus, by Theorem \ref{thm:2.3}, (\ref{eq:extra2.3}) must be true.
\end{proof}

When $\sum_{k=0}^{+\infty}\nu_{k}<+\infty$, Corollary \ref{cor:2.1} gives a worst-case complexity bound of $\mathcal{O}(\epsilon^{-2})$ iterations, which agrees with the bound established in \cite{GS}. The next result allows us to obtain worst-case complexity estimates even when $\left\{\nu_{k}\right\}$ is not summable.

\begin{corollary}
\label{cor:2.2}
Suppose that A1-A3 hold and that $\nu_{k}\to 0$. Let constant $C>0$ such that $\nu_{k}\leq C$ for all $k$ and, given $\delta>0$, let $k_{0}(\delta)$ be a positive integer such that $\nu_{k}\leq \delta$ if $k\geq k_{0}(\delta)$. Then, for any sequence $\left\{x_{k}\right\}_{k=0}^{+\infty}$ generated by Algorithm 1, if
\begin{equation}
T\geq \max\left\{\dfrac{2k_{0}(\delta/2)C}{\delta},1+k_{0}(\delta/2),\dfrac{2(f(x_{0})-f_{low})}{\kappa_{c}\epsilon^{2}}\right\}
\label{eq:2.10}
\end{equation}
for $\delta=\kappa_{c}\epsilon^{2}/2$, it follows that
\begin{equation}
\min_{k=0,\ldots,T-1}\|\nabla f(x_{k})\|\leq\epsilon.
\label{eq:2.11}
\end{equation}
In particular, if $\nu_{k}=M/k$ for all $k$, with $M>0$ constant, then (\ref{eq:2.11}) holds if 
\begin{equation}
T\geq \max\left\{\dfrac{16M^{2}}{\kappa_{c}^{2}\epsilon^{4}},1+\dfrac{4M}{\kappa_{c}\epsilon^{2}},\dfrac{2(f(x_{0})-f_{low})}{\kappa_{c}\epsilon^{2}}\right\}.
\label{eq:2.11b}
\end{equation}
\end{corollary}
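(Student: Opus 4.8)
The plan is to reduce everything to Theorem \ref{thm:2.3} by exhibiting an explicit admissible choice of $T_{0}(\epsilon)$. First I would observe that the hypothesis A4 is automatic here: since $\nu_{k}\to 0$, the Ces\`aro averages $\frac{1}{T}\sum_{k=0}^{T-1}\nu_{k}$ converge to the same limit $0$, so A4 holds and Theorem \ref{thm:2.3} is applicable. It then remains only to produce a $T_{0}(\epsilon)$ satisfying the defining implication (\ref{eq:extra2.2}) with threshold $\kappa_{c}\epsilon^{2}/2=\delta$, and to check that (\ref{eq:2.10}) forces $T\geq\max\{T_{0}(\epsilon),\,2(f(x_{0})-f_{low})/(\kappa_{c}\epsilon^{2})\}$.

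The core of the argument is a head/tail split of the average at the index $k_{0}:=k_{0}(\delta/2)$. For any $T$ with $T-1\geq k_{0}$ I would write
\[
\sum_{k=0}^{T-1}\nu_{k}=\sum_{k=0}^{k_{0}-1}\nu_{k}+\sum_{k=k_{0}}^{T-1}\nu_{k}\leq k_{0}C+(T-k_{0})\frac{\delta}{2}\leq k_{0}C+\frac{\delta}{2}T,
\]
using $\nu_{k}\leq C$ on the finitely many head terms and $\nu_{k}\leq\delta/2$ on the tail terms. Dividing by $T$ gives $\frac{1}{T}\sum_{k=0}^{T-1}\nu_{k}\leq \frac{k_{0}C}{T}+\frac{\delta}{2}$, and as soon as $T\geq 2k_{0}C/\delta$ the first term is at most $\delta/2$, so the whole average is at most $\delta=\kappa_{c}\epsilon^{2}/2$. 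Hence $T_{0}(\epsilon):=\max\{2k_{0}(\delta/2)C/\delta,\,1+k_{0}(\delta/2)\}$ is an admissible $T_{0}(\epsilon)$ for (\ref{eq:extra2.2}); the second entry $1+k_{0}(\delta/2)$ is present precisely to guarantee $T-1\geq k_{0}$, so that the split above is legitimate for \emph{every} $T\geq T_{0}(\epsilon)$, not merely at the threshold. With this choice, (\ref{eq:2.10}) is literally the hypothesis of Theorem \ref{thm:2.3}, and (\ref{eq:2.11}) follows.

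For the special case $\nu_{k}=M/k$ I would specialize the two constants. Since $M/k$ is decreasing in $k$, one may take $C=M$ (its largest value), and since $M/k\leq\delta/2\iff k\geq 2M/\delta$, one may take $k_{0}(\delta/2)=2M/\delta=4M/(\kappa_{c}\epsilon^{2})$ for $\delta=\kappa_{c}\epsilon^{2}/2$. Substituting these into (\ref{eq:2.10}) and simplifying turns the first entry into $16M^{2}/(\kappa_{c}^{2}\epsilon^{4})$ and the second into $1+4M/(\kappa_{c}\epsilon^{2})$, which is exactly (\ref{eq:2.11b}).

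I expect the only delicate point to be the bookkeeping in the split rather than any deep estimate: one must ensure the chosen $T_{0}(\epsilon)$ makes (\ref{eq:extra2.2}) hold for all $T\geq T_{0}(\epsilon)$, which is what the extra term $1+k_{0}(\delta/2)$ secures. A minor subtlety in the special case is that $2M/\delta$ need not be an integer; replacing it by $\lceil 2M/\delta\rceil$ only enlarges the bound by lower-order terms and leaves the stated $\mathcal{O}(\epsilon^{-4})$ rate intact.
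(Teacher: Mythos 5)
Your proof is correct and follows essentially the same route as the paper's: the head/tail split of $\frac{1}{T}\sum_{k=0}^{T-1}\nu_{k}$ at $k_{0}(\delta/2)$, bounding head terms by $C$ and tail terms by $\delta/2$, yielding $T_{0}(\epsilon)=\max\left\{2k_{0}(\delta/2)C/\delta,\,1+k_{0}(\delta/2)\right\}$ and then invoking Theorem \ref{thm:2.3}, with the identical specialization $C=M$, $k_{0}(\delta)=M/\delta$ for the case $\nu_{k}=M/k$. Your remark about rounding $2M/\delta$ up to an integer is a small point of rigor the paper glosses over, but it changes nothing substantive.
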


\begin{proof}
Given $\delta>0$, if
\begin{equation*}
T\geq\max\left\{\dfrac{2 k_{0}(\delta/2)C}{\delta},1+k_{0}(\delta/2)\right\}
\end{equation*}
we have
\begin{eqnarray*}
\dfrac{1}{T}\sum_{k=0}^{T-1}\,\nu_{k}& = &\dfrac{1}{T}  \left(\sum_{k=0}^{k_{0}(\delta/2)-1}\,\nu_{k}\right)+ \dfrac{1}{T} \left(\sum_{k=k_{0}(\delta/2)}^{T-1}\,\nu_{k}\right)\\
 &\leq &  \dfrac{1}{T} \left(\sum_{k=0}^{k_{0}(\delta/2)-1}\,C\right)+\dfrac{1}{T} \left(\sum_{k=k_{0}(\delta/2)}^{T-1}\,\dfrac{\delta}{2}\right)\\
 &\leq & \dfrac{1}{T} \left(\sum_{k=0}^{k_{0}(\delta/2)-1}\,C\right)+\dfrac{1}{T} \left(\sum_{k=0}^{T-1}\,\dfrac{\delta}{2}\right)\\
& \leq & \dfrac{k_{0}(\delta/2)C}{T}+\dfrac{\delta}{2}\\
 &\leq &\delta.
\end{eqnarray*}
Therefore, the assumption A4 is satisfied and (\ref{eq:extra2.2}) holds for  
\begin{equation*}
T_{0}(\epsilon)=\max\left\{\dfrac{2 k_{0}(\delta/2)C}{\delta},1+k_{0}(\delta/2)\right\},
\end{equation*}
with $\delta=\kappa_{c}\epsilon^{2}/2$.
Consequently, if (\ref{eq:2.10}) holds, then (\ref{eq:2.4}) is true and the conclusion comes directly from Theorem \ref{thm:2.3}.
Finally, suppose that $\nu_{k}=M/k$ for all $k$. Then, $\nu_{k}\to +\infty$, $\nu_{k}\leq M$ for all $k$ and, given $\delta>0$, 
\begin{equation*}
\nu_{k}=\dfrac{M}{k}\leq\delta\Longleftrightarrow k\geq\dfrac{M}{\delta}.
\end{equation*}
Hence, in this case, we have
\begin{equation*}
k_{0}(\delta)=\dfrac{M}{\delta}\quad\text{and}\quad C=M.
\end{equation*}
Therefore, the condition (\ref{eq:2.10}) becomes (\ref{eq:2.11b}).
\end{proof}
\begin{remark}
Consider $\nu_{k}=\epsilon/k$ for all $k\geq 1$, with $\epsilon\in (0,1)$. In this case, even though $\sum_{k=0}^{+\infty} \nu_k = +\infty$, it follows from Corollary \ref{cor:2.2} (with $M=\epsilon$) that Algorithm 1 takes at most $\mathcal{O}(\epsilon^{-2})$ iterations to generate $x_{k}$ such that $\|\nabla f(x_{k})\|\leq\epsilon$.
\end{remark}

A worst-case complexity bound of $\mathcal{O}\left(\epsilon^{-2}\right)$ also can be obtained for variants of Algorithm 1 characterized by the following assumption:
\begin{itemize}
\item[\textbf{A4'}] $\nu_{k}=\text{o}\left(\|\nabla f(x_{k})\|^{2}\right)$ as $k\to +\infty$, i.e., for all $\delta>0$, there exists $n_{0}\in\mathbb{N}$ such that
\begin{equation*}
\nu_{k}\leq\delta \|\nabla f(x_{k})\|^{2},\quad\forall k\geq n_{0}.
\end{equation*}
\end{itemize}
Under assumption A4', for any $\delta>0$, we can define the number
\begin{equation}
n_{0}(\delta)=\min\left\{n_{0}\in\mathbb{N}\,|\,\nu_{k}\leq\delta\|\nabla f(x_{k})\|^{2},\,\,\forall k\geq n_{0}\right\}.
\label{eq:alencar1}
\end{equation} 
One example of sequence $\left\{\nu_{k}\right\}$ satisfying A4' is
\begin{equation*}
\nu_{0}=0\quad\text{and}\quad \nu_{k}=k^{-1}\|\nabla f(x_{k})\|^{2}\quad\forall k\geq 1.
\end{equation*}

The next lemma gives a finite upper bound of $\mathcal{O}(\epsilon^{-2})$ for the \textit{total number of iterations} of Algorithm 1 in which $\|\nabla f(x_{k})\|>\epsilon$ for a given $\epsilon>0$.
\begin{lemma}
\label{lem:extra4}
Suppose that A1-A3 hold and let sequence $\left\{x_{k}\right\}_{k=0}^{+\infty}$ be generated by Algorithm 1. Given $\epsilon>0$, if A4' holds, then the number of elements of the set
\begin{equation}
\Omega_{\epsilon}=\left\{k\,|\,\|\nabla f(x_{k})\|>\epsilon\right\}
\label{eq:extra24}
\end{equation}
is bounded as follows
\begin{equation}
|\Omega_{\epsilon}|\leq k_{1}+\left[\dfrac{2\left(f(x_{0})-f_{low}+\sum_{i=0}^{k_{1}-1}\nu_{i}\right)}{\kappa_{c}}\right]\epsilon^{-2},
\label{eq:extra25}
\end{equation}
where $k_{1}=n_{0}(\frac{\kappa_{c}}{2})$ is independent of $\epsilon$, with $\kappa_{c}$ and $n_{0}(\cdot)$ defined in (\ref{eq:2.3}) and (\ref{eq:alencar1}), respectively.
\end{lemma}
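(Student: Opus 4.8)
The plan is to reuse the per-iteration descent inequality (\ref{eq:2.6}) from the proof of Theorem \ref{thm:2.3},
\[
\nu_{k}+f(x_{k})-f(x_{k+1})\geq\kappa_{c}\|\nabla f(x_{k})\|^{2},
\]
but, rather than summing the $\nu_{k}$ directly as in Theorem \ref{thm:2.3}, to use A4' to absorb the tail of the $\nu$-sum into the gradient terms on the right-hand side. Concretely, I would set $k_{1}=n_{0}(\kappa_{c}/2)$ as in (\ref{eq:alencar1}), so that $\nu_{k}\leq\frac{\kappa_{c}}{2}\|\nabla f(x_{k})\|^{2}$ for every $k\geq k_{1}$.

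First I would sum (\ref{eq:2.6}) over $k=0,\ldots,T-1$ for an arbitrary $T>k_{1}$ and invoke A2 to obtain
\[
\kappa_{c}\sum_{k=0}^{T-1}\|\nabla f(x_{k})\|^{2}\leq f(x_{0})-f_{low}+\sum_{k=0}^{T-1}\nu_{k}.
\]
Next I would split the $\nu$-sum as $\sum_{k=0}^{k_{1}-1}\nu_{k}+\sum_{k=k_{1}}^{T-1}\nu_{k}$ and bound the second piece, via A4', by $\frac{\kappa_{c}}{2}\sum_{k=k_{1}}^{T-1}\|\nabla f(x_{k})\|^{2}$. Transposing this quantity to the left and discarding the nonnegative contribution of the indices $k<k_{1}$ there yields
\[
\frac{\kappa_{c}}{2}\sum_{k=k_{1}}^{T-1}\|\nabla f(x_{k})\|^{2}\leq f(x_{0})-f_{low}+\sum_{k=0}^{k_{1}-1}\nu_{k}.
\]
Since the right-hand side does not depend on $T$, letting $T\to+\infty$ gives the same bound for the full tail $\sum_{k=k_{1}}^{+\infty}\|\nabla f(x_{k})\|^{2}$.

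Finally I would partition $\Omega_{\epsilon}$ into the indices with $k<k_{1}$ and those with $k\geq k_{1}$. The first group contributes at most $k_{1}$ elements. For the second, every such index satisfies $\|\nabla f(x_{k})\|^{2}>\epsilon^{2}$, so comparing their count against the tail bound above shows there are at most $\frac{2(f(x_{0})-f_{low}+\sum_{i=0}^{k_{1}-1}\nu_{i})}{\kappa_{c}}\epsilon^{-2}$ of them. Adding the two contributions produces exactly (\ref{eq:extra25}).

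The only genuinely delicate point is that $\{\nu_{k}\}$ need not be summable, so the tail $\sum_{k=k_{1}}^{T-1}\nu_{k}$ cannot be controlled by a constant in isolation; the crux is that A4' lets me bound it \emph{relative} to the very gradient-norm sum being estimated, and the absorption succeeds precisely because the residual $\kappa_{c}-\frac{\kappa_{c}}{2}=\frac{\kappa_{c}}{2}>0$ remains on the left. Everything else is a routine telescoping-plus-counting argument, so I anticipate no further difficulty.
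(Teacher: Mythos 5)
Your proposal is correct; every step checks out, including the delicate absorption step, which is legitimate because each partial sum $\sum_{k=k_{1}}^{T-1}\|\nabla f(x_{k})\|^{2}$ is finite, so it can be subtracted from both sides before letting $T\to+\infty$. However, your route differs from the paper's in its mechanics. The paper absorbs $\nu_{k}$ \emph{pointwise}: plugging A4' into (\ref{eq:2.6}) iteration by iteration gives $\frac{\kappa_{c}}{2}\|\nabla f(x_{k})\|^{2}\leq f(x_{k})-f(x_{k+1})$ for all $k\geq k_{1}$, i.e., the method becomes monotone after $k_{1}$; it then counts the elements of $\Omega_{\epsilon}\cap[k_{1},t]$ against the telescoping decrease $f(x_{k_{1}})-f_{low}$ (the eventual monotonicity is what allows a sum over a \emph{subset} of indices to be dominated by the full telescoping sum), and finally invokes $f(x_{k_{1}})\leq f(x_{0})+\sum_{i=0}^{k_{1}-1}\nu_{i}$ to recover the constant in (\ref{eq:extra25}). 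You instead absorb \emph{in aggregate}: sum (\ref{eq:2.6}) from $k=0$ first, then move the tail of the $\nu$-sum to the left, obtaining the bound $\frac{\kappa_{c}}{2}\sum_{k=k_{1}}^{+\infty}\|\nabla f(x_{k})\|^{2}\leq f(x_{0})-f_{low}+\sum_{i=0}^{k_{1}-1}\nu_{i}$, and count against this convergent series. Each approach buys something: the paper's yields the structural fact that $\left\{f(x_{k})\right\}_{k\geq k_{1}}$ is non-increasing, while yours establishes summability of $\|\nabla f(x_{k})\|^{2}$ over the tail, a strictly stronger intermediate conclusion that would give the lim-type result of Theorem \ref{thm:extra5} ($\|\nabla f(x_{k})\|\to 0$) immediately, without the contradiction argument the paper uses there; you also get the $f(x_{0})+\sum_{i=0}^{k_{1}-1}\nu_{i}$ constant for free by starting the summation at $k=0$ rather than needing (\ref{eq:extra30}) as a separate step.
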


\begin{proof}
By A4' and (\ref{eq:alencar1}), $k_{1}$ is well-defined and 
\begin{equation*}
\nu_{k}\leq\dfrac{\kappa_{c}}{2}\|\nabla f(x_{k})\|^{2},\quad\forall k\geq k_{1}.
\end{equation*}

Thus, it follows from (\ref{eq:2.6}) that

\begin{eqnarray*}
\kappa_{c}\|\nabla f(x_{k})\|^{2}&\leq & f(x_{k})-f(x_{k+1})+\nu_{k}\\
                                 &\leq & f(x_{k})-f(x_{k+1})+\dfrac{\kappa_{c}}{2}\|\nabla f(x_{k})\|^{2},\,\,\forall k\geq k_{1},
\end{eqnarray*}

which implies that
\begin{equation}
\dfrac{\kappa_{c}}{2}\|\nabla f(x_{k})\|^{2}\leq f(x_{k})-f(x_{k+1}),\quad\forall k\geq k_{1}.
\label{eq:extra26}
\end{equation}
Given $0\leq s<t$, let us define
\begin{equation}
\Omega_{\epsilon}(s,t)=\left\{s\leq k\leq t\,|\,\nabla f(x_{k})\|>\epsilon\right\}.
\label{eq:extra27}
\end{equation}
For all $t>k_{1}$, it follows from (\ref{eq:extra26}) that
\begin{equation*}
\dfrac{\kappa_{c}}{2}\epsilon^{2}\leq f(x_{k})-f(x_{k+1}),\quad\forall k\in\Omega_{\epsilon}(k_{1},t).
\end{equation*}
Therefore,
\begin{eqnarray*}
|\Omega_{\epsilon}(k_{1},t)|\dfrac{\kappa_{c}\epsilon^{2}}{2}&=&\sum_{k\in\Omega_{\epsilon}(k_{1},t)}\dfrac{\kappa_{c}\epsilon^{2}}{2}\\
&\leq &\sum_{k\in\Omega_{\epsilon}(k_{1},t)}f(x_{k})-f(x_{k+1})\\
&\leq &\sum_{k=k_{1}}^{t}f(x_{k})-f(x_{k+1})\\
& = & f(x_{k_{1}})-f(x_{t+1})\\
&\leq & f(x_{k_{1}})-f_{low},
\end{eqnarray*}
and so
\begin{equation}
|\Omega_{\epsilon}(k_{1},t)|\leq\left[\dfrac{2\left(f(x_{k_{1}})-f_{low}\right)}{\kappa_{c}}\right]\epsilon^{-2}.
\label{eq:extra28}
\end{equation}
Since $t>k_{1}$ is arbitrary, by (\ref{eq:extra27}), (\ref{eq:extra28}) and (\ref{eq:extra24}), we get
\begin{equation}
|\Omega_{\epsilon}|\leq k_{1}+|\Omega_{\epsilon}(k_{1},+\infty)|\leq k_{1}+\left[\dfrac{2(f(x_{k_{1}})-f_{low})}{\kappa_{c}}\right]\epsilon^{-2}.
\label{eq:extra29}
\end{equation}
Finally, notice that

\begin{equation}
f(x_{k_{1}})\leq f(x_{0})+\sum_{i=0}^{k_{1}-1}\nu_{i}.
\label{eq:extra30}
\end{equation}

Thus, (\ref{eq:extra25}) follows directly from (\ref{eq:extra29}) and (\ref{eq:extra30}).
\end{proof}

\section{Global Convergence Results}
The next theorem comes as a by-product from the previous complexity estimates and yields a convergence result which simplifies known proofs substantially and generalizes other non-monotone step-size rules.
\begin{theorem}
\label{thm:3.1}
Suppose that A1-A3 hold and let the sequence $\left\{x_{k}\right\}_{k=0}^{+\infty}$ be generated by Algorithm 1. If $\nu_{k}\to 0$ as $k\to +\infty$, then either there exists $\bar{k}$ such that $\nabla f(x_{\bar{k}})=0$ or
\begin{equation}
\liminf_{k\to +\infty}\,\|\nabla f(x_{k})\|=0.
\label{eq:2.12}
\end{equation}
\end{theorem}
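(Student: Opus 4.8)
The plan is to argue by contradiction, converting the global statement into the finite-time complexity bound of Corollary \ref{cor:2.2}. Assume the conclusion fails, so that both alternatives are violated simultaneously: $\nabla f(x_k)\neq 0$ for every $k$, and $\liminf_{k\to+\infty}\|\nabla f(x_k)\|=c>0$.

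The first step is to upgrade this $\liminf$ bound into a uniform positive lower bound over all iterates, $\mu:=\inf_{k\geq 0}\|\nabla f(x_k)\|>0$. Indeed, $\liminf_k\|\nabla f(x_k)\|=c>0$ furnishes an index $K$ with $\|\nabla f(x_k)\|\geq c/2$ for every $k\geq K$, while the finitely many remaining values $\|\nabla f(x_0)\|,\dots,\|\nabla f(x_{K-1})\|$ are all strictly positive since no gradient vanishes; taking $\mu=\min\{c/2,\|\nabla f(x_0)\|,\dots,\|\nabla f(x_{K-1})\|\}$ gives $\mu>0$ and $\|\nabla f(x_k)\|\geq\mu$ for all $k$.

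The second step invokes the complexity estimate. The standing hypotheses A1--A3 together with $\nu_k\to 0$ are exactly those of Corollary \ref{cor:2.2}, so applying it with $\epsilon=\mu/2>0$ produces a finite threshold $T$ (each quantity in the corresponding lower bound on $T$ is finite because $\nu_k\to 0$ makes $\{\nu_k\}$ bounded and makes $k_0(\cdot)$ well-defined), for which $\min_{0\leq k\leq T-1}\|\nabla f(x_k)\|\leq\mu/2$. This contradicts $\min_{0\leq k\leq T-1}\|\nabla f(x_k)\|\geq\mu>\mu/2$, and the contradiction proves the theorem.

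I expect the only delicate point to be the passage from $\liminf>0$ to $\inf>0$ in the first step; this is precisely where the hypothesis that no gradient vanishes enters, since it is needed to control the finitely many initial iterates below the threshold $K$. Everything afterward is a direct citation. A fully equivalent route would replace Corollary \ref{cor:2.2} by Lemma \ref{lem:extra4}: once $\|\nabla f(x_k)\|\geq c/2$ holds for all large $k$ and $\nu_k\to 0$, assumption A4$'$ is satisfied because $\nu_k/\|\nabla f(x_k)\|^2\to 0$, so $|\Omega_{c/2}|$ is finite by (\ref{eq:extra25}); yet the contradiction hypothesis forces $\Omega_{c/2}$ to contain every index $k\geq K$ and hence to be infinite.
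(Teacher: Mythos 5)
Your proof is correct and rests on exactly the same key ingredient as the paper's: Corollary \ref{cor:2.2}, invoked under A1--A3 and $\nu_{k}\to 0$ to force the running minimum of $\|\nabla f(x_{k})\|$ below any prescribed $\epsilon>0$. The paper argues directly (arbitrary $\epsilon$, hence $\min_{k=0,\ldots,T-1}\|\nabla f(x_{k})\|\to 0$, hence the dichotomy), whereas you frame the identical argument contrapositively via a uniform lower bound $\mu>0$; this is a purely stylistic difference, so the two proofs are essentially the same.
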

\begin{proof}
Let $\epsilon >0$. Since $\nu_{k}\to 0$ as $k\to +\infty$, there exist constants $C$ and $k_{0}(\frac{\kappa_{c}\epsilon^{2}}{4})>0$ such that $\nu_{k}\leq C$ for all $k$, and $\nu_{k}\leq\kappa_{c}\epsilon^{2}/4$ for all $k\geq k_{0}(\frac{\kappa_{c}\epsilon^{2}}{4})$. Thus, from Corollary \ref{cor:2.2}, if 
\begin{equation}
T\geq \max\left\{\dfrac{4k_{0}(\frac{\kappa_{c}\epsilon^{2}}{4})C}{\kappa_{c}\epsilon^{2}},1+k_{0}\left(\frac{\kappa_{c}\epsilon^{2}}{4}\right),\dfrac{2(f(x_{0})-f_{low})}{\kappa_{c}\epsilon^{2}}\right\}
\label{eq:2.14}
\end{equation}
then
\begin{equation*}
\min_{k=0,\ldots,T-1}\|\nabla f(x_{k})\|\leq\epsilon.
\end{equation*}
As $\epsilon>0$ is arbitrary, this proves that
\begin{equation*}
\lim_{T\to +\infty}\,\left(\min_{k=0,\ldots,T-1}\|\nabla f(x_{k})\|\right)=0.
\end{equation*}
Therefore, either there exists $\bar{k}$ for which $\|\nabla f(x_{\bar{k}})\|=0$ or (\ref{eq:2.12}) is true.
\end{proof}

More importantly, our analysis provides a unified global convergence proof for many non-monotone methods based on the method proposed in \cite{GRI}, which is one of the most used non-monotone line search algorithms. It corresponds to the modified Armijo rule
\begin{equation}
f(x_{k}+\alpha_{k}\beta^{l_{k}}d_{k})\leq \max_{0\leq j\leq m(k)}\,f(x_{k-j})+\rho\alpha_{k}\beta^{l_{k}}\langle \nabla f(x_{k}),d_{k}\rangle,
\label{eq:alencar2}
\end{equation}
for a suitable choice of $m(k)$.
Notice that this rule can be written in the form (\ref{eq:2.1}) with 
\begin{equation*}
\nu_{k,l}\equiv\nu_{k}=\max_{0\leq j\leq m(k)}\,f(x_{k-j})-f(x_{k}).
\end{equation*} 

\begin{corollary}
\label{cor:2.1a}
Suppose that A1-A3 hold and let sequence $\left\{x_{k}\right\}_{k=0}^{+\infty}$ be generated by Algorithm 1 where (\ref{eq:2.1}) is replaced by
\begin{equation}
f(x_{k}+\alpha_{k}\beta^{l_{k}}d_{k})\leq R_{k}+\rho\alpha_{k}\beta^{l_{k}}\langle\nabla f(x_{k}),d_{k}\rangle
\label{eq:2.16}
\end{equation}
with 
\begin{equation}
f(x_{k})\leq R_{k}\leq \max_{0\leq j\leq m(k)}\,f(x_{k-j})
\label{eq:2.17}
\end{equation}
where $m(0)=0$ and $0\leq m(k)\leq \min\left\{m(k-1)+1,N\right\}$, for a user-defined $N\in\mathbb{N}$. 
If
\begin{equation*}
\left\{x\in\mathbb{R}^{n}\,|\,f(x)\leq f(x_{0})\right\}
\end{equation*}
is bounded, then either exists $\bar{k}$ such that $\nabla f(x_{\bar{k}})=0$ or
\begin{equation*}
\liminf_{k\to +\infty}\,\|\nabla f(x_{k})\|=0.
\end{equation*}
\end{corollary}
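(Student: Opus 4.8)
The plan is to recognize the rule (\ref{eq:2.16})--(\ref{eq:2.17}) as an instance of Algorithm 1 and then run a telescoping argument over sliding windows of length $N+1$, in the same spirit as the proof of Lemma \ref{lem:extra4}, rather than forcing the situation into Theorem \ref{thm:3.1} by trying to verify $\nu_{k}\to 0$ directly. First I would set $\nu_{k,l}\equiv\nu_{k}:=R_{k}-f(x_{k})$. By the left inequality in (\ref{eq:2.17}) we have $\nu_{k}\geq 0$, so (\ref{eq:2.16}) is exactly the acceptance test (\ref{eq:2.1}); hence Lemma \ref{lem:2.2} and the chain (\ref{eq:2.6}) apply verbatim, giving $R_{k}-f(x_{k+1})\geq\kappa_{c}\|\nabla f(x_{k})\|^{2}$ for all $k$.

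Next I would introduce $M_{k}:=\max_{0\leq j\leq m(k)}f(x_{k-j})$ and show, using $m(0)=0$ and $0\leq m(k)\leq\min\{m(k-1)+1,N\}$, that $\{M_{k}\}$ is non-increasing: the window defining $M_{k+1}$ consists of $f(x_{k+1})\leq R_{k}\leq M_{k}$ together with values already dominated by $M_{k}$. Since $M_{0}=f(x_{0})$, this yields $f(x_{k})\leq M_{k}\leq f(x_{0})$, so every iterate lies in the level set $\{x\,:\,f(x)\leq f(x_{0})\}$. That set is assumed bounded, and as $f$ is continuous on $\mathbb{R}^{n}$ its closure is compact; therefore there is $f_{low}$ with $f(x_{k})\geq f_{low}$, and hence $M_{k}\geq f_{low}$, for all $k$.

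The heart of the proof is a contradiction. Suppose $\nabla f(x_{k})\neq 0$ for every $k$ and $\liminf_{k}\|\nabla f(x_{k})\|>0$; then there are $\epsilon>0$ and $K$ with $\|\nabla f(x_{k})\|>\epsilon$ for all $k\geq K$, so from $R_{k}\leq M_{k}$ and the inequality above we get $f(x_{k+1})\leq M_{k}-\kappa_{c}\epsilon^{2}$ for every $k\geq K$. I would then establish $M_{k+N+1}\leq M_{k}-\kappa_{c}\epsilon^{2}$ for all $k\geq K$: because $m(\cdot)\leq N$, the window defining $M_{k+N+1}$ uses only indices in $\{k+1,\ldots,k+N+1\}$, and each $f(x_{k+1+i})$ with $0\leq i\leq N$ satisfies $f(x_{k+1+i})\leq M_{k+i}-\kappa_{c}\epsilon^{2}\leq M_{k}-\kappa_{c}\epsilon^{2}$ by the per-step decrease (valid since $k+i\geq K$) and the monotonicity of $M_{k}$. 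Iterating along $k=K+n(N+1)$ forces $M_{k}\to-\infty$, contradicting $M_{k}\geq f_{low}$. Hence either $\nabla f(x_{\bar{k}})=0$ for some $\bar{k}$ or $\liminf_{k}\|\nabla f(x_{k})\|=0$.

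The step I expect to be most delicate is the window decrease $M_{k+N+1}\leq M_{k}-\kappa_{c}\epsilon^{2}$: this is precisely where the constraint $0\leq m(k)\leq\min\{m(k-1)+1,N\}$ is needed, since it guarantees both that old large function values are flushed out of the maximum after at most $N+1$ steps and that every index entering the relevant window already obeys the sufficient decrease. I would emphasize that this route deliberately avoids proving $\nu_{k}\to 0$, which would require $f(x_{k})\to\lim_{k}M_{k}$ and thus $\|x_{k+1}-x_{k}\|\to 0$ --- a property that is \emph{not} immediate in this setting because the effective step length $\alpha_{k}\beta^{l_{k}}$ need not be bounded above. The telescoping over windows sidesteps this difficulty and keeps the argument within the elementary decrease estimate (\ref{eq:2.6}).
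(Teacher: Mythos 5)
Your proof is correct, but it takes a genuinely different route from the paper's. The paper keeps the corollary inside its unified framework: it sets $\nu_{k}=R_{k}-f(x_{k})$, argues as in Lemma 2 of \cite{AMI} (this is where the bounded level set enters) that $\lim_{k\to+\infty}f(x_{k})=\lim_{k\to+\infty}\max_{0\leq j\leq m(k)}f(x_{k-j})$, concludes $\nu_{k}\to 0$, and then invokes Theorem \ref{thm:3.1}. You instead bypass the verification of $\nu_{k}\to 0$ entirely and give a self-contained, GLL-style argument: monotonicity of $M_{k}=\max_{0\leq j\leq m(k)}f(x_{k-j})$, the per-step estimate $R_{k}-f(x_{k+1})\geq\kappa_{c}\|\nabla f(x_{k})\|^{2}$ coming from (\ref{eq:2.6}), and the window decrease $M_{k+N+1}\leq M_{k}-\kappa_{c}\epsilon^{2}$, which telescopes to contradict the lower bound on $f$; each step checks out, and the constraint $0\leq m(k)\leq\min\{m(k-1)+1,N\}$ is used exactly where you say it is needed. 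What the paper's route buys is brevity and the demonstration that the Grippo--Lampariello--Lucidi rule is just an instance of the general liminf theorem, which is the paper's main selling point. What your route buys is self-containedness and in fact slightly more: since A2 already supplies a global lower bound $f_{low}$, your argument never truly needs the bounded-level-set hypothesis (you only use it to re-derive a bound you already have), so you establish the conclusion under weaker assumptions. Your closing observation is also well taken: because $\alpha_{k+1}=\alpha_{k}\beta^{l_{k}-1}$ allows the step sizes in Algorithm 1 to grow without bound, transferring the lemma of \cite{AMI} to this setting requires some care about step lengths, a subtlety that your telescoping argument sidesteps.
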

\begin{proof}
As in the proof of Lemma 2 in \cite{AMI}, by (\ref{eq:2.17}), one can show that
\begin{equation}
\lim_{k\to +\infty}\,f(x_{k})=\lim_{k\to +\infty}\max_{0\leq j\leq m(k)}\,f(x_{k-j}).
\label{eq:2.15}
\end{equation}
Hence for $\nu_k = R_k - f(x_k)$ we obtain
\begin{equation*}
\lim_{k\to +\infty}\,\nu_{k} = \lim_{k\to +\infty}\,  R_k - f(x_k)   \leq \lim_{k\to +\infty}\,  \max_{0\leq j\leq m(k)}\,f(x_{k-j}) - f(x_k) =0.
\end{equation*}
Therefore, the result follows directly from Theorem \ref{thm:3.1}. 
\end{proof}
This generalized convergence result includes, for example, the non-monotone methods in \cite{AHO,AHOO,AMI,NOS}. The worst-case complexity of these methods, however, depends on how fast $\nu_{k}=R_{k}-f(x_{k})$ converges to zero. Due to the $\max\left\{\cdot\right\}$ on the right-hand side of (\ref{eq:2.10}), the best iteration-complexity bound that one can get from Corollary \ref{cor:2.2} is $\mathcal{O}(\epsilon^{-2})$. This is exactly the complexity obtained by Cartis, Sampaio and Toint \cite{CAR2} for a non-monotone method based on rule (\ref{eq:alencar2}).

Notice that Theorem \ref{thm:3.1} gives a liminf-type convergence result. An improved lim-type result can be obtained for variants of Algorithm 1 characterized by assumption A4'. Indeed, from the complexity estimate given in Lemma \ref{lem:extra4}, we can establish the global convergence of Algorithm 1 with the same argument used to prove Corollary 2.1 in \cite{MAR}.

\begin{theorem}
\label{thm:extra5}
Suppose that A1-A3 hold and let the sequence $\left\{x_{k}\right\}_{k=0}^{+\infty}$ be generated by Algorithm 1. If A4' also holds, then
\begin{equation}
\lim_{k\to +\infty}\|\nabla f(x_{k})\|=0.
\label{eq:extra31}
\end{equation}
\end{theorem}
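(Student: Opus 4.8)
The plan is to deduce the full limit directly from the finite-cardinality estimate already established in Lemma~\ref{lem:extra4}, mimicking the argument used for Corollary 2.1 in \cite{MAR}. The essential observation is that, under A4', Lemma~\ref{lem:extra4} guarantees that for \emph{every} fixed $\epsilon>0$ the index set $\Omega_{\epsilon}=\left\{k\,|\,\|\nabla f(x_{k})\|>\epsilon\right\}$ is finite, with cardinality bounded by a quantity that depends on $\epsilon$ only through the factor $\epsilon^{-2}$. So, first I would fix an arbitrary $\epsilon>0$ and invoke Lemma~\ref{lem:extra4} to obtain $|\Omega_{\epsilon}|<+\infty$.

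Next, since $\Omega_{\epsilon}$ is a finite set, it either is empty or possesses a largest element; in either case I can pick an integer $K_{\epsilon}$ strictly larger than every index in $\Omega_{\epsilon}$. By the definition of $\Omega_{\epsilon}$ this means $\|\nabla f(x_{k})\|\leq\epsilon$ for all $k\geq K_{\epsilon}$, which immediately yields $\limsup_{k\to+\infty}\|\nabla f(x_{k})\|\leq\epsilon$. Because $\epsilon>0$ was arbitrary and $\|\nabla f(x_{k})\|\geq 0$ for every $k$, letting $\epsilon\downarrow 0$ forces $\limsup_{k\to+\infty}\|\nabla f(x_{k})\|=0$, and the nonnegativity then gives the genuine limit (\ref{eq:extra31}).

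There is essentially no analytical obstacle left in this step, since all the quantitative work has been absorbed into Lemma~\ref{lem:extra4}; what remains is a purely logical limiting argument. The one feature I would emphasize, because it is what genuinely drives the improvement from a $\liminf$ to a $\lim$, is that the threshold index $k_{1}=n_{0}(\frac{\kappa_{c}}{2})$ in the bound (\ref{eq:extra25}) is independent of $\epsilon$. This $\epsilon$-uniformity is exactly what keeps $|\Omega_{\epsilon}|$ finite for all $\epsilon$ simultaneously and licenses the clean passage $\epsilon\downarrow 0$; it hinges on the $\mathrm{o}(\|\nabla f(x_{k})\|^{2})$ decay postulated in A4', which lets the non-monotone term be reabsorbed into the descent (inequality (\ref{eq:extra26})) from a fixed iteration onward. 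Without that stronger decay one can only recover the $\liminf$-type statement of Theorem~\ref{thm:3.1}, so the crux is simply to apply Lemma~\ref{lem:extra4} correctly and track that $k_{1}$ does not move with $\epsilon$.
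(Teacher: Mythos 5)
Your proposal is correct and rests on exactly the same key ingredient as the paper's proof, namely the finiteness of $\Omega_{\epsilon}$ for every $\epsilon>0$ from Lemma~\ref{lem:extra4}; the paper merely phrases the final step contrapositively (a subsequence bounded away from zero would make $\Omega_{\epsilon}$ infinite), while you argue directly via $\limsup_{k\to+\infty}\|\nabla f(x_{k})\|\leq\epsilon$ for all $\epsilon$. One small remark: the $\epsilon$-independence of $k_{1}$ that you emphasize is not actually needed for this step --- finiteness of $|\Omega_{\epsilon}|$ for each fixed $\epsilon$ separately already suffices.
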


\begin{proof}
Suppose that (\ref{eq:extra31}) does not hold. Then, there exists $\epsilon>0$ and a subsequence $\left\{x_{k_{j}}\right\}_{j=0}^{+\infty}$ of $\left\{x_{k}\right\}_{k=0}^{+\infty}$ such that
\begin{equation*}
\|\nabla f(x_{k_{j}})\|>\epsilon,\quad\forall j\in\mathbb{N}.
\end{equation*}
This means that the corresponding set $\Omega_{\epsilon}=\left\{k\,|\,\|\nabla f(x_{k})\|>\epsilon\right\}$ is infinite, contradicting Lemma \ref{lem:extra4}.
\end{proof}

\section{A Metropolis-Based Non-Monotone Rule}

One of the core ideas of non-monotone rules is to allow the iterates to escape from local minimizers and to increase the probability of finding global minimizers. In the context of derivative-free heuristics for global optimization, Simulated Annealing \cite{MET,KIR,LOC} is one of the most efficient schemes. At the $k$th iteration of a simulated annealing algorithm, the acceptance or rejection of a candidate point $x_{k}^{+}$ is usually done by the \textit{Metropolis rule}: given a uniform random number $p_k \in [0,1]$, the next iterate is set as
\begin{equation}
x_{k+1}=\left\{\begin{array}{ll} x_{k}^{+},& \text{if}\,\,p_k \leq\min\left\{1,\text{exp}\left(-\dfrac{f(x_{k}^{+})-f(x_{k})}{\tau_{k}}\right)\right\},\\
                                 x_{k},&\text{otherwise},
              \end{array}
        \right.
\label{eq:4.1}
\end{equation}
where $\tau_{k}>0$ for all $k$, with $\tau_{k}\to 0$. By rule (\ref{eq:4.1}), if $f(x_{k}^{+})\leq f(x_{k})$ then $x_{k}^{+}$ is always accepted, i.e., $x_{k+1}=x_{k}^{+}$. However, the candidate point $x_{k}^{+}$ also can be accepted when $f(x_{k}^{+})>f(x_{k})$, allowing the iterates to escape from local minimizers. The larger the difference $f(x_{k}^{+})-f(x_{k})>0$ is, the smaller is the probability to accept $x_{k}^{+}$. Since $\tau_{k}\to 0$, the probability of accepting $x_{k}^{+}$ when $f(x_{k}^{+})>f(x_{k})$ also goes to zero when $k\to +\infty$.

Back to Algorithm 1, notice that the bigger is the non-monotone parameter $\nu_{k,l}$, the bigger is the chance to accept a candidate point $x_{k,l}^{+}=x_{k}+\alpha_{k}\beta^{l}d_{k}$ with $f(x_{k,l}^{+})>f(x_{k})$. Thus, we can try to mimic the Metropolis acceptance rule by choosing $\nu_{k,l}$ as follows:

\noindent\textbf{Step 2.1} Set $l:=0$.\\
\noindent\textbf{Step 2.2} Compute $x_{k,l}^{+}=x_{k}+\alpha_{k}\beta^{l}d_{k}$ and define
\begin{equation}
\nu_{k,l}=\sigma\text{exp}\left(-\dfrac{\max\left\{\theta,f(x_{k,l}^{+})-f(x_{k})\right\}}{\tau_{k}}\right)
\label{eq:4.2}
\end{equation}
for some constants $\sigma,\theta>0$ independent of $k$ and $l$, with $\tau_{k}=1/\ln(k+1)$. If
\begin{equation*}
f(x_{k,l}^{+})\leq f(x_{k})+\rho\alpha_{k}\beta^{l}\langle\nabla f(x_{k}),d_{k}\rangle +\nu_{k,l}
\end{equation*}
set $l_{k}=l$ and $\nu_{k}=\nu_{k,l_{k}}$. Otherwise, set $l:=l+1$ and repeat Step 2.2.

The next two theorems establish complexity bounds of $\mathcal{O}(\epsilon^{-2})$ and $\mathcal{O}(\epsilon^{-\frac{2(1+\theta)}{\theta}})$ for Algorithm 1, when $\theta>1$ and $\theta\in (0,1]$, respectively.
\begin{theorem}
\label{thm:5.1}
Suppose that A1-A3 hold and let the sequence $\left\{x_{k}\right\}_{k=0}^{+\infty}$ be generated by Algorithm 1 with $\nu_{k,l}$ defined by (\ref{eq:4.2}). Given $\epsilon>0$, if $\theta > 1$ and
\begin{equation} \label{eq:4.3a}
T\geq 2\max\left\{\sigma\sum_{k=0}^{+\infty}\,\frac{1}{(k+1)^{\theta}},\,f(x_{0})-f_{low}\right\}\kappa_{c}^{-1}\epsilon^{-2},
\end{equation}
then
\begin{equation}
\min_{k=0,\ldots,T-1}\|\nabla f(x_{k})\|\leq\epsilon.
\label{eq:4.4}
\end{equation}
\end{theorem}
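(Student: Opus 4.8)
The plan is to show that, when $\theta>1$, the specific choice (\ref{eq:4.2}) produces a summable sequence $\{\nu_{k}\}$, after which the statement reduces to a direct application of Corollary \ref{cor:2.1}. The one idea driving the whole argument is that the inner $\max\{\theta,\cdot\}$ in (\ref{eq:4.2}) supplies a lower bound $\theta$ on the exponent that is completely independent of the (unknown) function increment $f(x_{k,l}^{+})-f(x_{k})$, decoupling the size of $\nu_{k}$ from the trajectory.

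First I would bound the accepted parameter $\nu_{k}=\nu_{k,l_{k}}$. Since the map $t\mapsto\exp(-t/\tau_{k})$ is decreasing and $\max\{\theta,f(x_{k,l_{k}}^{+})-f(x_{k})\}\geq\theta$, definition (\ref{eq:4.2}) gives
\begin{equation*}
\nu_{k}\leq\sigma\exp\!\left(-\frac{\theta}{\tau_{k}}\right).
\end{equation*}
Substituting $\tau_{k}=1/\ln(k+1)$ turns this into
\begin{equation*}
\nu_{k}\leq\sigma\exp\bigl(-\theta\ln(k+1)\bigr)=\frac{\sigma}{(k+1)^{\theta}}.
\end{equation*}
This is where the hypothesis $\theta>1$ enters: summing the last bound over $k$ yields
\begin{equation*}
\sum_{k=0}^{+\infty}\nu_{k}\leq\sigma\sum_{k=0}^{+\infty}\frac{1}{(k+1)^{\theta}}<+\infty,
\end{equation*}
since the shifted $p$-series converges exactly when $\theta>1$. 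Hence $\{\nu_{k}\}$ is summable and assumption A4 holds.

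Finally I would invoke Corollary \ref{cor:2.1}. Because $\sum_{k}\nu_{k}\leq\sigma\sum_{k}(k+1)^{-\theta}$, the right-hand side of (\ref{eq:4.3a}) dominates the right-hand side of (\ref{eq:2.3a}); consequently any $T$ satisfying (\ref{eq:4.3a}) also satisfies the hypothesis of Corollary \ref{cor:2.1}, and the conclusion (\ref{eq:4.4}) follows at once. I do not anticipate a genuine obstacle here: the only step requiring insight is recognizing that the clamp $\max\{\theta,\cdot\}$ replaces the unknown increment by the constant $\theta$ and thereby manufactures the summable majorant $\sigma(k+1)^{-\theta}$; everything after that is a citation of the already-established $\mathcal{O}(\epsilon^{-2})$ bound.
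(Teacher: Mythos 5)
Your proposal is correct and follows essentially the same route as the paper's own proof: bound $\nu_{k}\leq\sigma(k+1)^{-\theta}$ via the clamp $\max\{\theta,\cdot\}\geq\theta$ and $\tau_{k}=1/\ln(k+1)$, conclude summability from the $p$-series with $\theta>1$, and apply Corollary \ref{cor:2.1}. Your final step checking that (\ref{eq:4.3a}) dominates (\ref{eq:2.3a}) is merely spelled out more explicitly than in the paper, which leaves it implicit in the phrase ``Corollary 1 yields the result.''
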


\begin{proof}
By (\ref{eq:4.2}), for all $k$ we have
\begin{equation}
\nu_{k}=\sigma e^{-\max\left\{\theta, f(x_{k+1})-f(x_{k})\right\}\text{ln}(k+1)}\leq\sigma \left(\dfrac{1}{k+1}\right)^{\theta}.
\label{eq:extra32}
\end{equation}
Thus, 
\begin{equation*}
\sum_{k=0}^{+\infty}\nu_{k}=\sigma\sum_{k=0}^{+\infty}\left(\dfrac{1}{k+1}\right)^{\theta}<+\infty,
\end{equation*}
and Corollary 1 yields the result.
\end{proof}

\begin{theorem}
\label{thm:extra6}
Suppose that A1-A3 hold and let sequence $\left\{x_{k}\right\}_{k=0}^{+\infty}$ be generated by Algorithm 1 with $\nu_{k,\ell}$ defined by (\ref{eq:4.2}). Given $\epsilon\in (0,1)$, if $\theta\in (0,1]$ and
\begin{equation}
T\geq\max\left\{\left(\dfrac{4}{\kappa_{c}}\right)^{\frac{1+\theta}{\theta}}\sigma^{\frac{1}{\theta}},1+\left(\dfrac{4\sigma}{\kappa_{c}}\right)^{\frac{1}{\theta}},\dfrac{2(f(x_{0})-f_{low})}{\kappa_{c}}\right\}\epsilon^{-\frac{2(1+\theta)}{\theta}},
\label{eq:extra33}
\end{equation}
then (\ref{eq:4.4}) holds.
\end{theorem}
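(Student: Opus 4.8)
The plan is to reduce Theorem~\ref{thm:extra6} to Corollary~\ref{cor:2.2}, in the same spirit that Theorem~\ref{thm:5.1} reduced the case $\theta>1$ to Corollary~\ref{cor:2.1}. The starting point is identical to the previous theorem: because $\max\{\theta,f(x_{k+1})-f(x_{k})\}\geq\theta$ and $\tau_{k}=1/\ln(k+1)$, the definition (\ref{eq:4.2}) yields the pointwise bound
\begin{equation*}
\nu_{k}\leq\sigma\left(\frac{1}{k+1}\right)^{\theta},
\end{equation*}
which is precisely (\ref{eq:extra32}). The essential difference now is that for $\theta\in(0,1]$ the series $\sum_{k}\nu_{k}$ need not converge, so the summable-case Corollary~\ref{cor:2.1} is unavailable. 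Instead I would exploit only that $\nu_{k}\to 0$ and invoke Corollary~\ref{cor:2.2}, which is designed exactly for non-summable but vanishing $\{\nu_{k}\}$.

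To feed Corollary~\ref{cor:2.2} I must supply its two data, the uniform bound $C$ and the threshold function $k_{0}(\cdot)$. From $\nu_{k}\leq\sigma(k+1)^{-\theta}\leq\sigma$ I take $C=\sigma$. From the requirement $\nu_{k}\leq\delta$ it suffices that $\sigma(k+1)^{-\theta}\leq\delta$, i.e.\ $k+1\geq(\sigma/\delta)^{1/\theta}$, so I set
\begin{equation*}
k_{0}(\delta)=\left(\frac{\sigma}{\delta}\right)^{1/\theta}.
\end{equation*}
I would then specialize to $\delta=\kappa_{c}\epsilon^{2}/2$ as prescribed, so that $k_{0}(\delta/2)=(2\sigma/\delta)^{1/\theta}=(4\sigma/(\kappa_{c}\epsilon^{2}))^{1/\theta}$.

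The remaining work is to evaluate the three entries of the maximum in (\ref{eq:2.10}) and to unify their $\epsilon$-powers. Substituting $C=\sigma$ into the first entry $2k_{0}(\delta/2)C/\delta$ collapses cleanly, since $\tfrac{2\sigma}{\delta}(2\sigma/\delta)^{1/\theta}=(2\sigma/\delta)^{(1+\theta)/\theta}$, which with $\delta=\kappa_{c}\epsilon^{2}/2$ becomes $(4\sigma/\kappa_{c})^{(1+\theta)/\theta}\,\epsilon^{-2(1+\theta)/\theta}$; the second entry $1+k_{0}(\delta/2)$ carries the power $\epsilon^{-2/\theta}$, and the third entry $2(f(x_{0})-f_{low})/(\kappa_{c}\epsilon^{2})$ carries $\epsilon^{-2}$. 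Since $\epsilon\in(0,1)$ and $2(1+\theta)/\theta$ is the largest of the three exponents, each term is dominated by its coefficient times $\epsilon^{-2(1+\theta)/\theta}$; factoring out this common power and collecting coefficients reproduces the threshold (\ref{eq:extra33}), whereupon Corollary~\ref{cor:2.2} gives (\ref{eq:4.4}).

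The conceptual content lies entirely in the first paragraph, namely the recognition that divergence of $\sum_{k}\nu_{k}$ for $\theta\le 1$ forces the Ces\`aro-average machinery of Corollary~\ref{cor:2.2} rather than Corollary~\ref{cor:2.1}. The only genuine obstacle is bookkeeping: tracking the exponents through the substitution $\delta=\kappa_{c}\epsilon^{2}/2$ and then using $\epsilon<1$ to dominate $\epsilon^{-2}$ and $\epsilon^{-2/\theta}$ by $\epsilon^{-2(1+\theta)/\theta}$ so that all three entries share one factor. I would pay particular attention to the power of $\sigma$ in the leading coefficient, since $2k_{0}(\delta/2)C/\delta$ naturally produces $(4\sigma/\kappa_{c})^{(1+\theta)/\theta}$, i.e.\ $\sigma^{(1+\theta)/\theta}$, which is the point at which an off-by-$\sigma$ slip is easiest to make.
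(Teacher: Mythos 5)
Your reduction is exactly the paper's own proof: the paper also reads off $C=\sigma$ and $k_{0}(\delta)=(\sigma/\delta)^{1/\theta}$ from (\ref{eq:extra32}), sets $\delta=\kappa_{c}\epsilon^{2}/2$, and checks that (\ref{eq:extra33}) dominates the three entries of (\ref{eq:2.10}), using $\epsilon\in(0,1)$ to absorb the smaller powers $\epsilon^{-2/\theta}$ and $\epsilon^{-2}$ into the common factor $\epsilon^{-2(1+\theta)/\theta}$.

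However, your write-up contains an unresolved contradiction, and it sits exactly where you place your warning at the end. As you correctly compute,
\begin{equation*}
\frac{2k_{0}(\delta/2)C}{\delta}
=\left(\frac{4\sigma}{\kappa_{c}\epsilon^{2}}\right)^{\frac{1+\theta}{\theta}}
=\left(\frac{4}{\kappa_{c}}\right)^{\frac{1+\theta}{\theta}}\sigma^{\frac{1+\theta}{\theta}}\,\epsilon^{-\frac{2(1+\theta)}{\theta}},
\end{equation*}
so the leading coefficient carries $\sigma^{(1+\theta)/\theta}$, whereas the first entry of (\ref{eq:extra33}) carries $\sigma^{1/\theta}$. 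Hence collecting coefficients does \emph{not} reproduce (\ref{eq:extra33}); it reproduces it with $\sigma^{1/\theta}$ replaced by $\sigma^{(1+\theta)/\theta}$, and the two agree only when $\sigma=1$. When $\sigma\leq 1$ one has $\sigma^{(1+\theta)/\theta}\leq\sigma^{1/\theta}$, so the stated hypothesis (\ref{eq:extra33}) still implies the condition required by Corollary \ref{cor:2.2} and the conclusion follows; but when $\sigma>1$ the stated threshold is strictly smaller than $2k_{0}(\delta/2)C/\delta$, and your argument does not establish the theorem as stated. For the record, the paper's own proof has the identical slip: it asserts the equality $\bigl(\tfrac{4}{\kappa_{c}}\bigr)^{\frac{1+\theta}{\theta}}\sigma^{\frac{1}{\theta}}\epsilon^{-\frac{2(1+\theta)}{\theta}}=\tfrac{2k_{0}(\delta/2)C}{\delta}$, which is false unless $\sigma=1$. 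So your closing remark identifies a real defect rather than a mere bookkeeping hazard; to make the proof airtight you should either state the conclusion with the corrected coefficient $\bigl(\tfrac{4\sigma}{\kappa_{c}}\bigr)^{\frac{1+\theta}{\theta}}$ in place of the first entry of (\ref{eq:extra33}), or add the hypothesis $\sigma\leq 1$.
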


\begin{proof}
By (\ref{eq:extra32}), we have $\nu_{k}\to 0$. Moreover, $\nu_{k}\leq\sigma$ and given $\delta>0$,
\begin{equation*}
\nu_{k}\leq\delta \quad\text{if}\quad k\geq \left(\dfrac{\sigma}{\delta}\right)^{\frac{1}{\theta}}.
\end{equation*}
Denote
\begin{equation*}
C=\sigma\quad\text{and}\quad k_{0}(\delta)=\left(\dfrac{\sigma}{\delta}\right)^{\frac{1}{\theta}}.
\end{equation*}
Taking $\delta=\kappa_{c}\epsilon^{2}/2$, it follows from (\ref{eq:extra33}) that
\begin{eqnarray*}
T&\geq & \max\left\{\left(\dfrac{4}{\kappa_{c}}\right)^{\frac{1+\theta}{\theta}}\sigma^{\frac{1}{\theta}}\epsilon^{-\frac{2(1+\theta)}{\theta}},1+\left(\dfrac{4\sigma}{\kappa_{c}}\right)^{\frac{1}{\theta}}\epsilon^{-\frac{2}{\theta}},\dfrac{2(f(x_{0})-f_{low})}{\kappa_{c}}\epsilon^{-2}\right\}\\
 & = &\max\left\{\dfrac{2k_{0}(\delta/2)C}{\delta},1+k_{0}(\delta/2),\dfrac{2(f(x_{0})-f_{low})}{\kappa_{c}\epsilon^{2}}\right\}.
\end{eqnarray*}
Thus, by Corollary \ref{cor:2.2}, (\ref{eq:4.4}) must be true.
\end{proof}

\begin{remark}
\label{rem:geninho}
The smaller is $\theta$, the bigger is the chance to accept $x_{k,\ell}^{+}$ with $f(x_{k,\ell}^{+})>f(x_{k})$. Thus, the higher level of non-monotonicity obtained with $\theta\in (0,1]$ may lead to better local minimizers. However, this has a price: by Theorem \ref{thm:extra6}, the number of iterations that Algorithm 1 needs to find approximate stationary points may be significantly bigger in comparison to the case $\theta>1$.
\end{remark}

\section{Preliminary Numerical Experiments}
We performed some numerical experiments comparing Octave implementations of six instances of Algorithm 1. Specifically, we considered the following codes:
\begin{itemize}
\item[(i)] the monotone algorithm obtained from Algorithm 1 by setting $\nu_{k,l}=0$ for all $k$ and $l$. We shall refer to this code as ``M1''.
\item[(ii)] the non-monotone algorithm in \cite{GRI} obtained from Algorithm 1 by setting $\nu_{k,l}=\max_{0\leq j\leq m_{k}}\left[f(x_{k-j})\right]-f(x_{k})$ for all $k$ and $l$, where $m(0)=0$ and $m(k)=\min\left[m(k-1)+1,10\right]$. We shall refer to this code as ``NM1''.
\item[(iii)] the non-monotone algorithm in \cite{ZHA} obtained from Algorithm 1 by setting $\nu_{k,l}=C_{k}-f(x_{k})$ for all $k$ and $l$, where $C_{0}=f(x_{0})$ and, for all $k\geq 1$,
\begin{equation*}
C_{k}=\dfrac{\eta_{k-1}Q_{k-1}C_{k-1}+f(x_{k})}{Q_{k}}\quad\forall k\geq 1,
\end{equation*}
$Q_{k}=\eta_{k-1}Q_{k-1}+1$ and $\eta_{k-1}=0.85/k$, with $Q_{0}=1$. We shall refer to this code as ``NM2''.
\item[(iv)] the non-monotone algorithm obtained from Algorithm 1 by setting $\nu_{0,l}=0$, and $\nu_{k,l}=\epsilon/k$ for $k\geq 1$, where $\epsilon$ is the desired precision for the norm of the gradient. We will refer to this code as ``NM3''.
\item[(v)] the non-monotone algorithm obtained from Algorithm 1 by setting $\nu_{0,l}=0$, and $\nu_{k,l}=\gamma_{k}\|\nabla f(x_{k})\|_{2}^{2}$ with $\gamma_{k}=\|\nabla f(x_{0})\|_{2}^{-2}\left(\frac{1}{k}\right)$, for $k\geq 1$. We will refer to this code as ``NM4''.
\item[(vi)] the non-monotone algorithm obtained from Algorithm 1 by setting $\nu_{k,l}$ as in (\ref{eq:4.2}), with user-define positive parameters $\sigma$ and $\theta$. We shall refer to this code as ``NM5($\sigma,\theta$)''.
\end{itemize}
In all implementations, we consider the parameters $\alpha_{0}=1$ and $\beta=\rho=0.5$. The search directions were generated as $d_{k}=-H_{k}\nabla f(x_{k})$, where $H_{k}$ is computed using the BFGS update whenever it is possible, namely:
\begin{equation*}
H_{k+1}=\left\{\begin{array}{ll} \left(I-\dfrac{s_{k}y_{k}^{T}}{s_{k}^{T}y_{k}}\right)H_{k}\left(I-\dfrac{y_{k}s_{k}^{T}}{s_{k}^{T}y_{k}}\right)+\dfrac{s_{k}s_{k}^{T}}{s_{k}^{T}y_{k}},&\text{if}\,\, s_{k}^{T}y_{k}>0,\\
                                 H_{k},           & \text{otherwise}.
						   \end{array}
				\right.
\end{equation*}
where $H_{0}=I$, $s_{k}=x_{k+1}-x_{k}$ and $y_{k}=\nabla f(x_{k+1})-\nabla f(x_{k})$. All the experiments were performed with Octave 4.2.2 on a PC with a 2.70 GHz Intel(R) i5 microprocessor. 

In our first experiment, we applied the referred codes to the set of problems from \cite{MOR}\footnote{We considered the same dimensions as in \cite{GS}.}. We used the stopping rules:
\begin{equation}
\|\nabla f(x_{k})\|_{2}\leq 10^{-5},
\label{eq:stela1}
\end{equation}
and
\begin{equation}
k=k_{\max}\equiv 500.
\label{eq:stela2}
\end{equation}
We declare that a problem $p$ was solved by a solver $s$ when $s$ stopped due to (\ref{eq:stela1}). In this case, let $$
n_{p,s}=\text{the number of iterations required to solve problem $p$ by solver $s$.}
$$ 
As proposed in \cite{DOL}, the relative performance of solver $s$ on problem $p$ can be measured in terms of the \textit{performance ratio}
\begin{equation*}
r_{p,s}=\dfrac{n_{p,s}}{n_{p}^{*}},\quad\text{where}\,\,n_{p}^{*}=\min\left\{n_{p,s}\,:\,s\in\mathcal{S}\right\}.
\end{equation*}
Using $r_{p,s}$, the \textit{performance profile} for each code $s$ is defined as
\begin{equation*}
\rho_{s}(\tau)=\dfrac{\text{no. of problems s.t. $r_{p,s}\leq\tau$}}{\text{total no. of problems}}.
\end{equation*}
Note that $\rho_{s}(1)$ is the percentage of problems for which the solver $s$ wins over the rest of the solvers (i.e., $n_{p,s}=n_{p}^{*}$). The usual graphs of performance profiles are not very informative in this case because of the superposition of a large number of lines (one for each code), which makes difficult the interpretation of the results. Therefore, we summarize the relevant information at Table \ref{table:01}. Specifically, we report the performance ratio $r_{p,s}$ for each pair $(p,s)$ in our test set, and $\rho_{s}(1)$ for each solver $s$. 

\newpage
\noindent An entry ``F'' indicates that the corresponding code stopped due to (\ref{eq:stela2}). As we can see, solver NM4 was the most efficient (winning on $60.6\%$ of the problems), while solvers NM1 and NM3 were the most robust (failing in only two problems). The superior performance of NM2 and NM4 are in line with the empirical evidence that it is better to start with a bigger non-monotone term far from a critical point and to have a smaller one close to it (see, e.g., \cite{AHO,NOS}). Moreover, the fact that NM5($\epsilon,2$) was more efficient than NM5($\epsilon,1$) is in accordance with Theorems \ref{thm:5.1} and \ref{thm:extra6} (recall Remark \ref{rem:geninho}). These numerical results illustrate the ability of our new nonmonotone rules for finding approximate critical points. 

\begin{table}
\centering
\small{    
\begin{tabular}{ | l | c | c | c | c | c | c | c |}
    \hline
 \textbf{PROBLEM ($n_{p}^{*}$)} &\textbf{M1}& \textbf{NM1} & \textbf{NM2} & \textbf{NM3}& \textbf{NM4} & \textbf{NM5($\epsilon,2$)} & \textbf{NM5($\epsilon,1$)} \\ \hline
    1.  ($43$)             & 1.000 & 1.162 & 1.000 & 1.558 & 1.000 & 1.581 & 1.534 \\ 
    2.  ($27$)             & 1.037 & 4.629 & 1.037 & 1.666 & 1.000 & 1.444 & 1.629 \\ 
    3.  ($173$)            & 1.017 & 1.034 & 1.017 & F     & 1.000 & F     & F     \\ 
    4.  ($50$)             & 1.480 & 4.680 & 1.140 & 1.260 & 1.000 & 1.300 & 1.260 \\ 
    5.  ($17$)             & 1.058 & 1.529 & 1.058 & 1.764 & 1.000 & 2.235 & 1.705 \\ 
    6.  ($36$)             & 1.000 & 5.666 & 1.000 & 3.083 & 1.000 & 1.027 & 3.055 \\ 
    7.  ($34$)             & 1.000 & 1.323 & 1.000 & 1.205 & 1.000 & 1.676 & 1.176 \\ 
    8.  ($19$)             & 1.000 & 2.473 & 1.000 & 2.421 & 1.000 & 1.105 & 2.368 \\ 
    9.  ($3$)              & 1.333 & 1.333 & 1.333 & 1.333 & 1.333 & 1.000 & 1.000 \\ 
    10. ($305$)            & F     & F     & F     & F     & F     & 1.000 & F     \\ 
    11. ($9$)              & 1.000 & 1.111 & 1.000 & 10.000& 1.000 & 1.111 & 10.000\\ 
    12. ($25$)             & 1.000 & 3.240 & 1.200 & 4.440 & 1.000 & 1.120 & 4.120 \\ 
    13. ($39$)             & 1.000 & 1.102 & 1.000 & 6.000 & 1.000 & 1.025 & 6.461 \\ 
    14. ($85$)             & 1.000 & 1.729 & 1.000 & 1.741 & 1.000 & 1.023 & 1.729 \\ 
    15. ($24$)             & 1.000 & 1.166 & 1.000 & 1.666 & 5.416 & 1.083 & 1.625 \\
    16. ($83$)             & F     & 1.108 & F     & 1.698 & F     & 1.000 & 1.686 \\
    17. ($79$)             & 1.000 & 1.075 & 1.000 & 1.050 & 1.000 & F     & F     \\
    18. ($37$)             & 1.054 & 1.081 & 1.054 & 1.000 & 1.054 & 1.432 & 5.648 \\
    19. ($49$)             & 1.081 & 1.000 & 1.081 & 1.612 & 1.040 & 1.428 & 1.591 \\
    20. ($57$)             & 1.000 & 1.263 & 1.000 & 1.666 & 1.000 & 2.368 & 1.649 \\
    21. ($67$)             & 1.000 & 1.910 & 1.000 & 1.835 & 1.000 & 1.194 & 1.820 \\
    22. ($39$)             & 1.000 & 1.102 & 1.000 & 6.000 & 1.000 & 1.025 & 6.461 \\
    23. ($63$)             & 1.158 & 4.031 & 1.158 & 2.857 & 1.158 & 1.000 & 2.857 \\
    24. ($16$)             & 1.187 & 1.187 & 1.187 & 5.562 & 1.000 & 1.375 & 5.500 \\
    25. ($60$)             & 1.000 & 3.350 & 1.000 & 1.650 & 1.000 & 1.083 & 1.633 \\
    26. ($58$)             & F     & F     & F     & 1.000 & F     & F     & 1.000 \\
    27. ($10$)             & 1.900 & 1.000 & 1.900 & 3.300 & F     & 1.100 & 1.400 \\
    28. ($8$)              & 1.750 & 1.500 & 1.000 & 2.500 & 1.125 & 4.125 & 2.750 \\
    29. ($31$)             & 1.032 & 1.709 & 1.000 & 1.032 & 1.032 & 2.193 & 1.032 \\
    30. ($38$)             & 1.078 & 1.473 & 1.000 & 5.078 & 1.078 & 2.342 & 5.052 \\
    31. ($1$)              & 4.000 & 3.000 & 3.000 & 3.000 & 3.000 & 1.000 & 1.000 \\
    32. ($20$)             & 1.000 & 1.000 & 1.000 & 1.000 & 1.000 & 1.000 & 1.000 \\
    33. ($18$)             & 1.055 & 1.000 & 1.055 & 1.000 & 1.000 & 1.000 & 1.000 \\\hline
    $\rho_{s}(1)$   & 0.454 & 0.121 & 0.515 & 0.121 & 0.606 & 0.212 & 0.151\\\hline
    \end{tabular}
}
\caption{Numerical results for problems from the Mor\'e-Garbow-Hillstrom collection \cite{MOR}.}
\label{table:01}
\end{table}

In order to investigate the ability of non-monotone methods for finding better local optima, we applied all codes to minimize the two-dimensional Griewank function \cite{GRIE}:
\begin{equation}
f(x)=1+\dfrac{x_{1}^{2}}{4000}+\dfrac{x_{2}^{2}}{4000}-\cos(x_{1})\cos\left(x_{2}/\sqrt{2}\right).
\label{eq:5.2}
\end{equation}
This function has a huge number of local minimizers but only one global minimizer, namely $x^{*}=(0,0)$ with $f(x^{*})=0$. We considered 60 initial points generated in the box $\left[-600,600\right]\times\left[-600,600\right]$:
\begin{equation*}
\left(-600+\frac{1200(i-1)}{3},-600+\frac{1200(j-1)}{14}\right),\quad i=1,\ldots,4,\,\,j=1,\ldots,15.
\end{equation*}
For each starting point we recorded the best function value found by each code within 500 iterations. The distributions of these values are summarized in Table \ref{table:02}.

\begin{table}[ht]
\centering    
\begin{tabular}{ | l | c | c | c | c | c | c | }
\hline
                    & \textbf{M1}          & \textbf{NM1}        & \textbf{NM2}        & \textbf{NM3}        & \textbf{NM4}        & \textbf{NM5($\epsilon,2$)}        \\ \hline
    Maximum         & $179.8002$ & $136.3502$ & $179.8002$ & $179.8002$ & $179.8002$ & $179.8002$ \\\hline 
    75th Percentile & $119.1955$ & $89.9534$  & $119.1955$ & $119.1955$ & $119.1955$ & $119.1955$ \\\hline
    Median          & $82.7324$  & $25.2736$  & $82.7324$  & $82.7324$  & $78.1701$  & $82.7324$  \\\hline 
    25th Percentile & $34.0983$  & $9.7496$   & $28.9691$  & $34.0983$  & $34.0983$  & $34.0983$  \\\hline
    Minimum         & $10.1014$  & $0.3353$   & $10.1014$  & $10.1014$  & $10.1014$  & $10.1014$  \\\hline 
    \end{tabular}
\caption{Results for the Griewank function: distributions of the best function values found by each code within 500 iterations.}
\label{table:02}
\end{table}

From Table \ref{table:02}, we see that code NM1 (with more aggressive non-monotone behavior) was much better than the other codes in terms of the best function values found. Since in NM5 the non-monotonicity can be increased by increasing $\sigma$ or decreasing $\theta$, we set $\sigma=|f(x_{0})|$ and tested several values of $\theta$. The distributions of the best function values found by each variant of NM5 within 500 iterations are summarized on Table \ref{table:03}.

\begin{table}[ht]
\centering    
\begin{tabular}{ | l | c | c | c | c | c | c | }
\hline
 \textbf{NM5($|f(x_{0})|,\theta$)} & $\theta=4$ & $\theta=2$ & $\theta=1$ & $\theta=0.5$ & $\theta=0.25$ & $\theta=0.125$\\\hline
    Maximum         & $136.3843$ & $124.3656$ & $70.2559$ & $19.2514$ & $10.1188$ & $18.2874$ \\\hline 
    75th Percentile & $99.6332$  & $96.3803$  & $29.7006$ & $6.8724$  & $4.9171$  & $2.6889$ \\\hline
    Median          & $62.0849$  & $70.6839$  & $19.2193$ & $2.1444$  & $1.6082$  & $0.9238$  \\\hline 
    25th Percentile & $34.0983$  & $19.2036$  & $10.1014$ & $0.7201$  & $0.3102$  & $0.2367$  \\\hline
    Minimum         & $10.1014$  & $5.8595$   & $1.1145 $ & $0.0377$  & $0.0404$  & $0.0609$  \\\hline 
    \end{tabular}
\caption{Results for code NM5 with $M=|f(x_{0})|$ and different values of $\theta$.}
\label{table:03}
\end{table}
As expected, the best function values were obtained with small values of $\theta$ (see Remark \ref{rem:geninho}). Moreover, the function values obtained with $\theta\leq 0.5$ were significantly better than the values obtained with NM1. These preliminary results confirm the ability of non-monotone methods of escaping from the closest local minimizers. Moreover, they suggest that non-monotone line searches based on the Metropolis rule (as in NM5) may be competitive with standard non-monotone methods on difficult problems with many non-global local minimizers.

\section{Conclusion}

In this paper, we investigated the worst-case complexity of a generalized version of the non-monotone line search framework proposed in \cite{SAC} for smooth unconstrained optimization problems. In this framework, the level of non-monotonicity is controlled by a sequence $\left\{\nu_{k}\right\}$ of non-negative parameters. In a previous paper \cite{GS}, we proved that the algorithms in the referred framework take at most $\mathcal{O}(\epsilon^{-2})$ iterations to find $\epsilon$-critical points, when the objective $f$ is nonconvex. For that, we had to assume that $\sum_{k=0}^{+\infty}\nu_{k}<+\infty$. Now, by refining our analysis, we were able to obtain bounds of the same order even when $\sum_{k=0}^{+\infty}\nu_{k}=+\infty$. Our generalized results include a unified global convergence proof for non-monotone schemes in which $\nu_{k}\to 0$, allowing more freedom for the design of new non-monotone line search algorithms. As a topic for future research, it would be interesting to investigate the possible extension of our results to inexact subsampled methods for minimizing finite sums \cite{BEL1,BEL2}. 

\section*{Acknowledgements}
We are very grateful to the three anonymous referees, whose comments helped to improve significantly the paper. We are also grateful to Masoud Ahookhosh for his insightful comments on the first version of this paper.



\end{document}